\definecolor{webgreen}{rgb}{0,.5,0}
\definecolor{webbrown}{rgb}{.6,0,0}
\begin{document}

\vspace*{2.1cm}

\theoremstyle{plain}
\newtheorem{theorem}{Theorem}
\newtheorem{corollary}[theorem]{Corollary}
\newtheorem{lemma}[theorem]{Lemma}
\newtheorem{proposition}[theorem]{Proposition}
\newtheorem{obs}[theorem]{Observation}
\newtheorem{claim}[theorem]{Claim}

\theoremstyle{definition}
\newtheorem{definition}[theorem]{Definition}
\newtheorem{example}[theorem]{Example}
\newtheorem{remark}[theorem]{Remark}
\newtheorem{conjecture}[theorem]{Conjecture}
\newtheorem{question}[theorem]{Question}

\begin{center}

\vskip 1cm

{\Large\bf Efficient total face coloring of planar and toroidal maps} %
\vskip 5mm
\large
Italo J. Dejter

University of Puerto Rico

Rio Piedras, PR 00936-8377

\href{mailto:italo.dejter@gmail.com}{\tt italo.dejter@gmail.com}
\end{center}

\begin{abstract}
Let $2\le k\in\mathbb{Z}$. 
A total coloring of a simple connected regular graph via the color set $ \{0,1,\ldots, k\}$ 
 is said to be {\it efficient} if each  
 color yields an efficient dominating set, where the efficient domination condition applies to the restriction of each color class to the vertex set.
 Focus is set upon planar and toroidal maps. Each such map  is said to cover its induced graph.
 An efficient total coloring of one such graph induces an efficient total face coloring of its covering combinatorial map if it assigns a vertex-and-edge $k$-color set to the boundary cycle of each of its faces, with the consequently missing color in $\{0,1,\ldots,k\}$ 
 assigned to the face itself, so that the two adjacent faces along any edge are assigned different colors. 
 \end{abstract}


\section{Total coloring and efficient domination}

\begin{definition}\label{tc} Let $G$ be a simple connected graph. A {\it total coloring}, or {\it TC}, of $G$ is a color assignment of colors to $V(G)\cup E(G)$ such that adjacent vertices get different colors, edges incident to the same vertex get different colors  and  if an edge $e$ is incident with a vertex $v$ then $e$ and $v$ get different colors.
 The {\it total chromatic number} $\chi''(G)$ of $G$ is the least number of colors required by a TC of $G$.\end{definition}

Let $G$ be a simple connected graph. A {\it total coloring}, or {\it TC}, of $G$ is a color assignment of colors to $V(G)\cup E(G)$ such that adjacent vertices get different colors, edges incident to the same vertex get different colors  and  if an edge $e$ is incident with a vertex $v$ then $e$ and $v$ get different colors.
The TC Conjecture, posed independently by Behzad in 1965 \cite{B1,B2} and by Vizing in 1969 \cite{V}, asserts that the total chromatic number of $G$ (namely, the least number of colors required by a TC) is either $\Delta(G)+1$ or $\Delta(G)+2$, where $\Delta$ is the largest degree of any vertex of $G$. 
 This conjecture was established for cubic graphs \cite{Feng,Mazzu,Rosen,Vi}, meaning that the total chromatic number of cubic graphs is either 4 or 5. To decide whether a cubic graph $G$ has total chromatic number $\Delta(G)+1$, even for bipartite cubic graphs, is NP-hard \cite{Arroyo}. A 2023 survey \cite{tc-as} contains an updated bibliography on TCs. 
 
In \cite{+1}, total colorings of $k$-regular graphs of girth $k+1$ ($1<k\in\mathbb{Z}$) in the presence of efficient dominating sets  \cite{worst,Tomai,D73,Deng, EDS,Knor} were considered. For such purpose, Definitions~\ref{antes} and \ref{ahora} are introduced.

\begin{definition}\label{antes} Let $N_G[v]$ and $N_G(v)$ be the {\it closed} and {\it open neighborhoods} of $v\in V(G)$, respectively \cite{D73}..
If $c$ is a total coloring of $G$ and $S_i=\{v\in V(G):c=i\}$,  where $i\in[[k]_0=\{0,1,\ldots,k\}$
then $S_i$ is an {\it efficient domination set}, or {\it EDS}, also called {\it perfect codes}, if $|N_G[v])\cap S_i|=1,\forall v\in V(G)\mbox{ and }\forall i\in[k]_0$. Or equivalently: if each $S_i$ is independent and every vertex outside $S_i$ has exactly one neighbor in $S_i$.
\end{definition}

\begin{definition}\label{ahora} 
A vertex and edge coloring $c$ of a $k$-regular simple graph $G$ ($2\le k\in\mathbb{Z}$) is said to be an {\it efficient total coloring} ({\it ETC}), and $G$ said to be {\it ETCed}, if:
\begin{enumerate}
\item[\bf(a)] as in Definition~\ref{tc}, each $v\in V(G)$ and its neighbors are assigned by $c$ all the colors in $[k]_0$ via a bijection $N_G[v]=N_G(v)\cup\{v\}\leftrightarrow[k]_0$;
\item[\bf(b)] $c$ partitions $V(G)$ into $k+1$ EDSs.
\end{enumerate}\end{definition}

\noindent Definition~\ref{ahora} implies that the total chromatic number of $G$ is $\Delta(G)+1$. 
 
 \section{Maps and tilings}\label{ccc}
  
\begin{definition}  A {\it map graph} is an undirected graph formed as the intersection graph of finitely many simply connected and internally disjoint regions of the Euclidean plane $\mathbb{R}^2$. A {\it planar graph} $G$ is a map graph with no edge crossings, so there is a {\it (combinatorial) map} $M(G)$ associated to it, namely an equivalence class of topologically equivalent drawings of $G$ on the sphere $S$ (obtained from $\mathbb{R}^2$ by stereographic projection \cite[pg. 128]{Bondy}).     
\end{definition}

\begin{definition}\label{genius}
The {\it genus} of a graph $G$ is the number of handles that must be added to the sphere in order to avoid edge crossings in any drawing of $G$ in the resulting oriented surface.
\end{definition}

\begin{definition}\label{belt} Let $G$ be a connected simple graph.  
Let $S$ be a connected closed oriented surface whose genus is that of $G$. An embedding of $G$ in $S$ yields a {\it (combinatorial) map} $M(G)$ of $G$ in $S$,
meaning that there is a partition of $S\setminus(V(G)\cup E(G))$ into open simply connected regions realizable in $\mathbb{R}^2$ whose closures in $S$ are to be called the {\it faces} of $M(G)$. Let $F(G)$ be the set of faces of $M(G)$. 
A {\it belt} of $M(G)$ is a face boundary cycle, namely the boundary cycle of an element of $F(G)$. Let $3<\ell\in\mathbb{Z}$. An $\ell$-{\it belt} of $M(G)$ is a belt that has length $\ell$. 
\end{definition}

\begin{definition}\label{que} Let $\sigma$ be the correspondence that assigns to a map $M$ its {\it 1-skeleton}, namely the graph spanned by the vertices and edges of $M$.
Let $1\le k\in\mathbb{Z}$ and let $M(G)$ be a map of a connected simple graph $G$, so that $\sigma(M(G))=G$. 
We say that an ETC $c:G\rightarrow[k]_0$ extends to an {\it efficient total face coloring}, or {\it ETFC} $c':M(G)\rightarrow[k]_0$ if $c=c'|_G$ assigns a (vertex-and-edge) $k$-color set to the belt
 $\sigma(\Phi)$ of each of its faces $\Phi$, with the consequently missing color in $[k]_0\setminus c(\sigma(\Phi))$ assigned by $c'$ to $\Phi$, and so that the two adjacent faces along any edge have different colors. If in addition, any two faces with a common vertex have different colors, an ETFC is said to be a {\it strict} ETFC, or {\it SETFC}.
 \end{definition}

\begin{definition}\label{sETFC}\cite[Section 1.3]{Branko}
A {\it tessellation}, or {\it tiling} $T$ is the covering of a surface using one or more geometric shapes, called {\it tiles}, with  no overlaps and no gaps. 
An {\it edge} of $T$ is the intersection between two bordering tiles. A {\it vertex} of $T$ is the nonempty intersection of three or more edges. A {\it flag} of $T$  consists of a vertex, an edge and a tile of $T$, that are mutually incident to each other. 
All tiles  of $T$ are understood as closed sets that include their vertices and edges.
A tiling $T$ is said to be {\it edge-to-edge} if any two tiles of $T$ are either disjoint, or have precisely one common point, which is a vertex of the tiles sharing it, or share a segment, which is an edge of the two tiles sharing it.
A tiling $T$ is {\it regular} if its automorphism group acts transitively on its {\it flags}.   
This is equivalent to the tiles of $T$ being congruent regular polygons and $T$ being {\it edge-to-edge}. 
A tiling $T$ is {\it Archimedean} if the arrangements of tiles around all vertices are similar. A tiling $T$ is {\it uniform} if the automorphism group of $T$ acts transitively on the vertex set of $T$.
\end{definition}

\begin{definition} \cite[pg. 319]{BV}. 
Let $1<h\in\mathbb{Z}$. An {\it $h$-zonogon} is a convex polygon $$(A_0A_1A_2\cdots A_hA_{h+1}A_{h+2}\cdots,A_{h-1})$$ in $\mathbb{R}^2$
with $2h$ vertices $A_0,A_1,\ldots,A_{2h-1}$ and $2h$ sides given by segments $$A_0A_1,\, A_1A_2,\, \ldots,\, A_hA_{h+1},\, A_{h+1}A_{h+2},\, \ldots,\, A_{2h-1}A_0$$ forming pairs 
$$\{A_0A_1,A_{h+1}A_h\},\, \{A_1A_2,A_{h+2}A_{h+1}\},\, \ldots,\, \{A_{h-1}A_h,A_0A_{2h-1}\}$$ of parallel sides of equal lengths and opposite orientations. Given an $h$-zonogon $X$, an orientable surface $S=S_X$ of genus $h-1$ is obtained from $X$ by identifying its pairs of opposite sides:
\begin{eqnarray}\label{equiv}A_0A_1\equiv A_{h+1}A_h,\; A_1A_2\equiv A_{h+2}A_{h+1},\; \ldots,\; A_{h-1}A_h\equiv A_0A_{2h-1},\end{eqnarray} in which case $X$ is said to be an {\it $h$-cutout} of $S_X$. A map $M(G)$ of a simple connected graph $G$ of genus $h-1$ in $S_X$ can be represented by considering $X$, not only as an $h$-zcutout of $S(X)$, but also as a {\it $h$-cutout} of $M(G)$. In such a case, $M(G)$ can be recovered by identifying the pairs of opposite sides of $X$, as expressed in (\ref{equiv}).
\end{definition}

\begin{definition}
A {\it lattice} $\mathcal{L}$ in $\mathbb{R}^2$ is an infinite set of points
$\mathcal{L}=\{a_1v_1+a_2; a_1,a_2\in\mathbb{Z}\}$, where $\{v_1,v_2\}$ is a basis for $\mathbb{R}^2$.
The area enclosed by such vectors $v_1$ and $v_2$ forms a parallelogram (or rectangle) known as a {\it fundamental domain}. 
A {\it lattice tiling} is a tiling by translates of a single tile (e.g. a fundamental domain) by the points of a lattice. This tile serves as a 2-cutout for the toroid $S$ and the mentioned translates are said to be {\it lattice-translates}.
\end{definition}

\begin{example}
A {\it rectangular cutout} of $S$ is a 2-cutout given as a rectangle $(ABCD)$ in $\mathbb{R}^2$ with opposite oriented side pairs $(AB,CD)$ and $(AC,BD)$ from which $S$ can be recovered by means of both identifications $AB\equiv CD$ and $AC\equiv BD$. An {\it hexagonal cutout} of $S$ is a 3-cutout given as a convex 3-zonogon $(ABCDEF)$ with opposite oriented side pairs $(AB,DE)$, $(BC,EF)$ and $(CD,FA)$ from which $S$ can be recovered by means of the identifications $AB\equiv DE$, $BC\equiv EF$ and $CD\equiv FA$. 
Both rectangular and hexagonal cutouts $X$ may be considered as tiles of respective lattice tilings of $\mathbb{R}^2$ yielding the toroid $\mathbb{T}$ by identification of the pairs of opposite sides. Then, $\mathbb{T}$ can be considered as a topological quotient of the {\it covering map} $\theta:\mathbb{R}^2\rightarrow\mathbb{T}$ that sends $X$ and each of its lattice-translates onto $\mathbb{T}$ by the indicated identifications in $X$. 
\end{example}

\begin{remark}
Given a map $M(G)$ of a connected simple graph $G$ represented in a rectangular or hexagonal cutout $X$ or in its extension in $\mathbb{R}^2$ via the inverse image $\theta^{-1}(\mathbb{T})$, the edges of $G$ are represented by their endvertex pairs: if $e\in E(G)$ has endvertices $u,v$, then we express $e=\{u,v\}$. Moreover, each face $\Phi$ of $M(G)$ is represented by the set $V(\Phi)$ of its vertices ordered in one of the directions of its belt $\sigma(\Phi)$. As all the edges of 
 maps below are represented by segments of $\mathbb{R}^2$ and all faces of maps are represented by convex sets of $\mathbb{R}^2$, then w.l.o.g. we consider the edges and faces as compact subsets of $\mathbb{R}^2$, so that $\mathbb{R}^2$ and its quotient toroid $\mathbb{T}$ via $\theta$ are taken as the disjoint unions of the vertex points. the linear interiors of the edges and the interiors of the faces, where the {\it linear interior} of a segment $[(u,v),(z,w)]\in\mathbb{R}^2$ is $((u,v),(z,w))\in\mathbb{R}^2$, where $u,v,z,w\in\mathbb{R}$.
\end{remark}

In the following sections, SETFCs on planar and toroidal maps are constructed. They arise from Euclidean tilings by convex regular polygons, namely the regular (square, triangular and hexagonal) and Archimedean uniform (or semiregular) tilings \cite{Branko}. It is shown below that seven of the eight Archimedean uniform tilings of $\mathbb{R}^2$ admit SETFCs and ETFCs by means of edge removals from the square and triangular tilings, using  Cundy-Rollet notation \cite{Cundy}, as well as the rhombille tiling \cite{Grun} notation.

\begin{example} In what follows, let $k$ be the cardinality of the color set employed in Definition~\ref{sETFC}.
There are either six equilateral triangles or four squares or three regular hexagons around each vertex of a regular tiling of the plane, yielding (also in Cundy-Rosset notation \cite{Cundy}) the three known regular tiling, namely: {\bf(a)} the square tiling, C\&R $4^4$; {\bf(b)} the triangular tiling, C\&R $3^6$; and {\bf(c)} the hexagonal tiling, C\&R $6^3$. These tilings are treated for the production of SETFCs or ETFCs in corresponding plane and toroidal maps: for {\bf(a)} in Sections~\ref{sq} and~\ref{tri}  (Figures~\ref{holes} for $k=5$ and Figure~\ref{mod7} with $k=7$); for {\bf(b)} Section~\ref{tri} with $k=7$ (left of Figure~\ref{cubo}) and for {\bf(c)} Section~\ref{ss3} with $k=7$ (Figure~\ref{tubo} and left of Figure~\ref{hubo}), respectively. 

As derivations of the constructed SETFCs from these three regular tilings, also are considered the eight Archimedean, uniform or semiregular tilings of the plane \cite{Grun}, whose Cundy and Rosset notations are: C\&R $4.8^2$ (right of Figure~\ref{holes}), C\&R $3^34^2$ (right of Figure~\ref{cubo}),  C\&R $3^2.4.3.4$ (Figure~\ref{7-5}), C\&R $(3.6)^2$ (right of Figure~\ref{hubo}), C\&Rs $3^4.6$ and 3.4.6.4 (Figure~\ref{3464}) and C\&Rs $3.12^3$ and $4.6.12$ (Figure~\ref{3.12^3}). In addition, Figure~\ref{rombos} deals with the rhombille tiling \cite{Grun}.   
\end{example}

\section{SETFCs based on the square tiling}\label{sq}

In this section, we determine SETFCs based on the square tiling of $\mathbb{R}^2$ whose vertex set is the lattice $\mathbb{Z}^2$.

\begin{theorem}
There are SETFCs on planar and toroidal maps based on the square tiling of $\mathbb{R}^2$ by rectangular tiles that are interior-disjoint unions of translated copies of $[0,5]\times[0,1]$, including their continuation, or concatenation, over all of $\mathbb{R}^2$. Let $Y_{[]}$  be the smallest such toroidal map. Then $Y_{[]}$ has as its 1-skeleton  $\sigma(Y_{[]})$ the graph $\sigma(Y_{[]})=K_5$ obtained by identifying the left and right sides of the rectangle $[0,5]\times[0,1]$ as well as its bottom and top sides but with the top side translated two units to the right with respect to the bottom side. Moreover, if $d_{[]}$ is an SETFC of $Y_{[]}$, then each of $V(Y_{[]})$, $E(Y_{[]})$ and $F(Y_{[]})$ use just once each of the colors in $[5]$ in $d_{[]}$.
\end{theorem}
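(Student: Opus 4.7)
The plan is to realize $Y_{[]}$ explicitly as a toroidal quotient of the square-tiling structure on the fundamental rectangle $R:=[0,5]\times[0,1]$, and then to exhibit a concrete SETCC on it and verify the required axioms. First I set up coordinates on the torus $T$ obtained from $R$ by the identifications $(0,y)\sim(5,y)$ and $(x,0)\sim(x+2\bmod 5,\,1)$. The vertex set is the collection of integer lattice points of $R$ modulo these identifications; a direct count yields exactly five classes $v_0,\ldots,v_4$ using the representatives $v_i=(i,0)$, noting that the shift sends $(i,1)$ to $v_{i-2\bmod 5}$. The $1$-cells inherited from the square tiling split into five unit horizontal edges (each bottom-side edge being glued to the corresponding shifted top-side edge) and five unit vertical edges (the interior ones at $x=1,2,3,4$ together with the identified boundary edge at $x\equiv 0$). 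The horizontal edges realize the ``distance-$1$'' chords $v_iv_{i+1}$, whereas the vertical edge at $x=i$ joins $v_i=(i,0)$ to $(i,1)\sim v_{i-2\bmod 5}$, producing the five ``distance-$2$'' chords; together these account for every pair of the five vertices exactly once, so $\sigma(Y_{[]})=K_5$. The $2$-cells are the five unit squares $[i,i+1]\times[0,1]$, and Euler's formula $5-10+5=0=\chi(T)$ serves as a sanity check.

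Next, I construct the SETCC $d_{[]}$ by the closed-form rule $d_{[]}(v_i):=2i\bmod 5$ on vertices, $d_{[]}(v_iv_j):=(i+j)\bmod 5$ on edges, and on each face $B$ the unique element of $[5]$ missing from $d_{[]}(\sigma(B))$. The edge rule is the classical proper $5$-edge-coloring of $K_5$: the edge colors incident to $v_i$ are $\{(i+j)\bmod 5:j\ne i\}=[5]\setminus\{2i\bmod 5\}=[5]\setminus\{d_{[]}(v_i)\}$, which directly verifies Definition~\ref{ahora}(a), and since every EDS of $K_5$ is a singleton and each color hits exactly one vertex, Definition~\ref{ahora}(b) follows as well. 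For each unit-square face $B$, its four corner-vertex colors are four distinct elements of $[5]$, and the incidence check above forces each of $B$'s boundary-edge colors to lie among those four; hence $|d_{[]}(\sigma(B))|=4$ and $B$ is legally assigned the unique missing fifth color. To upgrade to strictness, note that any two $4$-subsets of a $5$-set share at least three elements, so any two of the five faces of $Y_{[]}$ share at least three vertices; consequently any SETCC is forced to assign pairwise distinct face colors, which my rule achieves.

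Finally, the color-use bookkeeping is immediate: the five vertices and the five faces each exhaust the colors of $[5]$ bijectively (the face case being forced as just noted), and each color labels exactly two edges, namely a perfect matching on the four vertices of color distinct from it. The plane statement follows by lifting $d_{[]}$ periodically along the lattice $\Lambda=\langle(5,0),(2,1)\rangle\subset\mathbb Z^2$. The main technical obstacle I anticipate is the bookkeeping of the shift: one must verify that the vertical square-tiling segments, after the shift-$2$ top-bottom identification, produce the five distance-$2$ chords rather than duplicating the horizontal edges or creating loops or multi-edges. Once this is in hand, every remaining SETCC verification reduces to an arithmetic identity modulo $5$.
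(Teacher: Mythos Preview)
Your argument is correct and follows the same strategy as the paper---an explicit construction plus direct verification---only reversed in direction: the paper first writes the SETCC on all of $\mathbb{Z}^2$ via closed formulas such as $c(x,y)=(2+x+2y)\bmod 5$ and then quotients to the torus, whereas you build $Y_{[]}$ first, identify its $1$-skeleton with $K_5$ via the horizontal/vertical chord dichotomy (more explicitly than the paper, which simply asserts it), define the coloring algebraically on $K_5$ by $d(v_i)=2i$ and $d(v_iv_j)=i+j$, and lift to the plane at the end. Your coloring is not literally the paper's, but differs from it only by a permutation of $[5]$.

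One step deserves tightening: the claim that ``the incidence check above forces each of $B$'s boundary-edge colors to lie among those four'' does not follow from the mere fact that the edges at $v_i$ avoid color $2i$. What is actually needed is that neither of the two edges of the missing color $m=2(i+2)$ lies on the boundary $4$-cycle of face $i$; a one-line computation shows these two edges are precisely the diagonals $\{v_i,v_{i-1}\}$ and $\{v_{i+1},v_{i-2}\}$ of that cycle, so the claim holds---but this is the mod-$5$ arithmetic you allude to at the end, not the incidence observation you cite. Finally, your remark that each color labels exactly two edges is correct and quietly repairs the theorem's ``just once'' wording for $Y_{[]}^1$, which cannot hold as stated since $|Y_{[]}^1|=10$.
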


\begin{figure}[htp]
\includegraphics[scale=1.47]{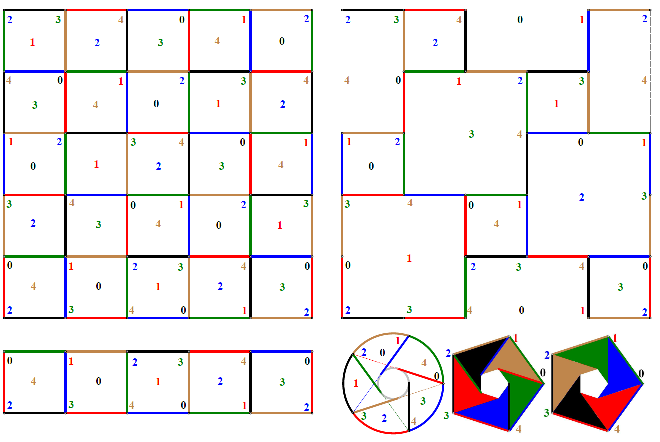}
\caption{SETFCs for the square and C\&R $4.8^2$ tilings. Toroidal SETFC via square tiling.}
\label{holes}
\end{figure}   

\begin{proof} Consider the map $M_\square$ provided in $\mathbb{R}^2$ with 
$$\begin{array}{ll}
\mathbb{Z}^2=&V(M_\square)=\{\;(x,y); x,y\in\mathbb{Z}\}\subset\mathbb{R}^2,\\
 &E(M_\square)=\{\{(x,y),(x+1,y)\};x,y\in\mathbb{Z}\}\cup\{\{(x,y),(x,y+1)\};x,y\in\mathbb{Z}\}\mbox{ and }\\
 &F(M_\square)=\{\{(x,y),(x+1,y),(x,y+1),(x+1,y+1)\};x,y\in\mathbb{Z}\}.
\end{array}$$
The faces in $F(M+\square)$ have interiors 
 $$Int([(x,y),(x+1,y)]\times[(x,y),(x,y+1)])=((x,y),(x+1,y))\times((x,y),(x,y+1)),$$
 for all $(x,y)\in\mathbb{Z}^2$. The edges in $E(M_\square)$ have interiors
$$Int([(x,y),(x+1,y)])=((x,y),(x+1,y))\mbox{ and }Int([(x,y),(x,y+1)])=((x,y),(x,y+1)),$$ for all $(x,y)\in\mathbb{Z}^2$. 
   
An SETFC $c_\square$ over $M_\square$ is given periodically over $\mathbb{R}^2$ by setting: 
\begin{eqnarray}\label{sq1}\begin{array}{l}
\;c_\square(x,y)=(2+x+2y)\mod 5,\mbox{ for }(x,y)\in V(M_\square)=\mathbb{Z}^2,\\  
c_\square(\{(x,y),(x+1,y)\})=(x+2y)\mod 5,\mbox{ for }\{(x,y),(x+1,y)\}\in E(M_\square),\\
c_\square(\{(x,y),(x,y+1)\})=(3+x+2y)\mod 5,\mbox{ for }\{(x,y),(x,y+1)\}\in E(M_\square),\\
c_\square(\{(x,y),(x+1,y),(x,y+1),(x+1,y+1)\})=(1+x+2y)\mod 5,\\
\;\mbox{ for }\{(x,y),(x+1,y),(x,y+1),(x+1,y+1)\}\in F(M_\square).\\
\end{array}\end{eqnarray}
This SETFC $c_\square$ is doubly periodic, with both horizontal and vertical periods equal to $5$. A double periodic tile for $c_\square$ in $\mathbb{R}^2$ is represented in display (\ref{oct6}), where the coordinate $x$ increases to the right and the coordinate $y$ increases downward, with the origin $O=(0,0)$ placed in the upper-left corner. The upper left of Figure~\ref{holes} represents such a periodic tile with its upper-left corner as the origin, where colors from 0 to 4 are given by colors black, red, blue, green and hazel, respectively.
\begin{eqnarray}\label{oct6}\begin{array}{ccc}
2\hspace*{3.0mm} _-^0\hspace*{3.0mm}3\hspace*{3.0mm} _-^1\hspace*{3.0mm}4\hspace*{3.0mm} _-^2\hspace*{3.0mm}0\hspace*{3.0mm}_-^3\hspace*{3.0mm}1\hspace*{3.0mm}_-^4\hspace*{3.0mm}2
\\
_3|\hspace*{3.0mm}1\hspace*{3.0mm}_4|\hspace*{3.0mm}2\hspace*{3.0mm}_0|\hspace*{3.0mm}3\hspace*{3.0mm}_1|\hspace*{3.0mm}4\hspace*{3.0mm}_2|\hspace*{3.0mm}0\hspace*{3.0mm}|_3
\\
4\hspace*{3.0mm} _-^2\hspace*{3.0mm}0\hspace*{3.0mm} _-^3\hspace*{3.0mm}1\hspace*{3.0mm} _-^4\hspace*{3.0mm}2\hspace*{3.0mm}_-^0\hspace*{3.0mm}3\hspace*{3.0mm}_-^1\hspace*{3.0mm}4
\\
_0|\hspace*{3.0mm}3\hspace*{3.0mm}_1|\hspace*{3.0mm}4\hspace*{3.0mm}_2|\hspace*{3.0mm}0\hspace*{3.0mm}_3|\hspace*{3.0mm}1\hspace*{3.0mm}_4|\hspace*{3.0mm}2\hspace*{3.0mm}|_0
\\
1\hspace*{3.0mm} _-^2\hspace*{3.0mm}2\hspace*{3.0mm} _-^0\hspace*{3.0mm}3\hspace*{3.0mm} _-^1\hspace*{3.0mm}4\hspace*{3.0mm}_-^2\hspace*{3.0mm}0\hspace*{3.0mm}_-^3\hspace*{3.0mm}1
\\
_2|\hspace*{3.0mm}0\hspace*{3.0mm}_3|\hspace*{3.0mm}1\hspace*{3.0mm}_4|\hspace*{3.0mm}2\hspace*{3.0mm}_0|\hspace*{3.0mm}3\hspace*{3.0mm}_1|\hspace*{3.0mm}4\hspace*{3.0mm}|_2
\\
3\hspace*{3.0mm} _-^1\hspace*{3.0mm}4\hspace*{3.0mm} _-^2\hspace*{3.0mm}0\hspace*{3.0mm} _-^3\hspace*{3.0mm}1\hspace*{3.0mm}_-^4\hspace*{3.0mm}2\hspace*{3.0mm}_-^0\hspace*{3.0mm}3
\\
_4|\hspace*{3.0mm}2\hspace*{3.0mm}_0|\hspace*{3.0mm}3\hspace*{3.0mm}_1|\hspace*{3.0mm}4\hspace*{3.0mm}_2|\hspace*{3.0mm}0\hspace*{3.0mm}_3|\hspace*{3.0mm}1\hspace*{3.0mm}|_4
\\
0\hspace*{3.0mm} _-^3\hspace*{3.0mm}1\hspace*{3.0mm} _-^4\hspace*{3.0mm}2\hspace*{3.0mm} _-^0\hspace*{3.0mm}3\hspace*{3.0mm}_-^1\hspace*{3.0mm}4\hspace*{3.0mm}_-^2\hspace*{3.0mm}0
\\
_1|\hspace*{3.0mm}4\hspace*{3.0mm}_2|\hspace*{3.0mm}0\hspace*{3.0mm}_3|\hspace*{3.0mm}1\hspace*{3.0mm}_4|\hspace*{3.0mm}2\hspace*{3.0mm}_0|\hspace*{3.0mm}3\hspace*{3.0mm}|_1
\\
2\hspace*{3.0mm} _-^0\hspace*{3.0mm}3\hspace*{3.0mm} _-^1\hspace*{3.0mm}4\hspace*{3.0mm} _-^2\hspace*{3.0mm}0\hspace*{3.0mm}_-^3\hspace*{3.0mm}1\hspace*{3.0mm}_-^4\hspace*{3.0mm}2
\\
\end{array}\end{eqnarray}
In other words, since the vertex set $V(M_\square)=\mathbb{Z}^2$ is the plane integer lattice, the upper left of Figure~\ref{holes} or display (\ref{oct6}) shows that, by identifying in parallel the left and right sides of the tile $\mathbb{L}=[0,5]\times[0,5]$ and then its top and bottom sides, a toroid $\mathbb{T}_{5\times 5}$ is obtained via a quotient map $\phi:\mathbb{R}^2\rightarrow\mathbb{T}_{5\times 5}$ taking $M_\square$ and its SETFC $c_\square$ onto a toroidal map $Y_\square$ and corresponding toroidal SETFC $d_\square$, respectively.

The tile $\mathbb{L}$ belongs to the tiling $\mathcal{L}_{5\times 5}=\{[5p,5p+5]\times[5q,5q+5];(p,q)\in\mathbb{Z}^2\}$ of $\mathbb{R}^2$ based on the sub-lattice $\{(5p,5q);p,q\in\mathbb{Z}\}$ of $\mathbb{Z}^2$.
We note that other toroidal maps and SETFCs are obtained from sub-lattices of $\mathbb{Z}^2$ that are coarser than $\mathcal{L}_{5\times 5}$, with rectangular tiles that are the union of tiles of $\mathcal{L}_{5\times 5}$.  

Still other toroidal maps are obtained from $Y_\square$ by passing it through doubly periodic quotient maps from $\mathbb{R}^2$ onto rectangular tiles $[0,5]\times[0,j]$, where $1\le j<5$, with identification in parallel of the vertical sides and displaced identification of the top and bottom sides. The smallest tile of an adequate sub-tiling $\mathcal{L}_{1\times 5}$ here is formed say covering the rectangle $\mathbb{L}=[0,5]\times[4,5]$ (see lower-left of Figure~\ref{holes}), which yields the toroidal map $Y_{[]}$ having $\sigma(Y_{[]})=K_5$ and SETFC $c_{[]}$ as in the cutout in the lower right of Figure~\ref{holes}, with the face colors shown schematically to its right on the visible convex side of a toroid and farther to the right on its corresponding hidden concave side.
\end{proof}

A particular case of a construction of \cite{Dean}: Let $K_4(K_5)$ be the graph whose vertices are the arcs of $K_5$ such that for each $v\in K_5$ there is a copy of $K_4$ whose vertices are the arcs departing from $v$ and such that for each edge $\{v,w\}$ of $K_5$ there is an edge $\{v,w\}$ of $K_4(K_5)$ joining the vertices representing the arcs $(v,w)$ and $(w,v)$. We visualized $K_4(K_5)$ as obtained from $K_5$ by replacing each vertex by a copy of $K_4$. 

\begin{figure}[htp]
\includegraphics[scale=0.58]{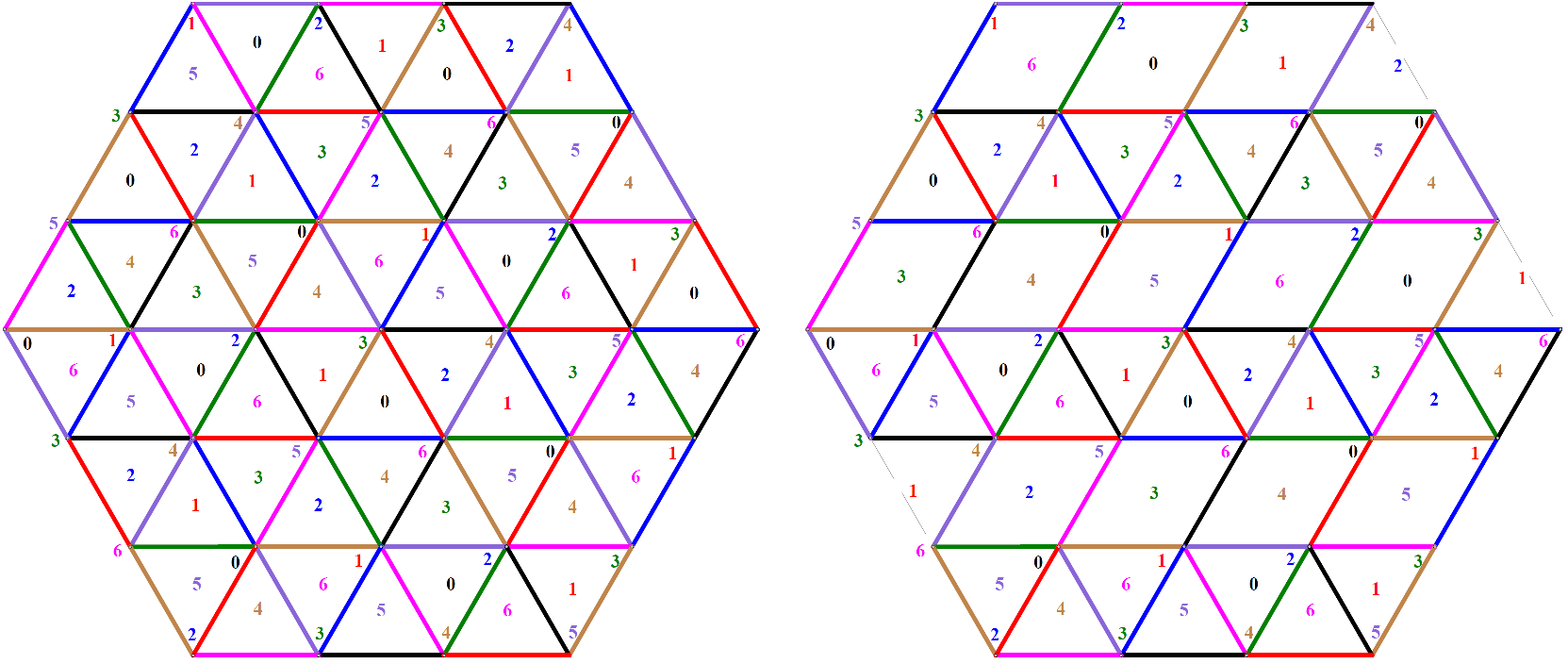}
\caption{SETFC for the triangular and C\&R $3^3.4^2$ tilings.}
\label{cubo}
\end{figure}

Consider $K_5$ as an edge-colored graph $K_5=C_5^r\cup C_5^b$ with $V(C_5^r)=V(C_5^b)=V(K_5)$, where $C_5^r$ and $C_5^b$ are 5-cycles with the edges red and blue, respectively. Assigning to the two arcs representing an edge $e$ of $K_5$ the same color of $e$, we assign to the vertices of $K_4(K_5)$ the colors of the arcs they are representing.
Then, let $K_4'(K_5)=K_4'(C_5^r\cup C_5^b)$ be obtained from $K_4(K_5)=K_4(C_5^r\cup C_5^b)$ by removing the pair of edges in each copy of $K_4$ representing a vertex of $K_5$ with different endvertex colors. The upper-right of Figure~\ref{holes} represents a toroidal cutout $\mathbb{D}$ of $K_4'(K_5)$ as in the following corollary. 
Recall that $P_n$ denotes an $(n-1)${\it -path}, meaning a path of length $n-1$, and that $P_m\square P_n$ stands for the {\it cartesian product} \cite{Imrich} of the paths $P_m$ and $P_n$. The segment $[0,5]\subset\mathbb{R}$ is taken as a copy of the path  $P_6$ in the graph induced by the lattice $\mathbb{Z}$ in $\mathbb{R}$.

\begin{corollary}
There are SETFCs in plane and toroidal maps based on the Archimedean uniform {\rm C\&R} $4.8^2$ tiling of $\mathbb{R}^2$ by rectangular tiles that are the interior-disjoint unions of translated copies of $[0,5]\times[0,5]$ and their continuation, or concatenation, over all of $\mathbb{R}^2$. Let $Y_{=}$  be the smallest such map. Then, $\sigma(Y_{=})=K_4'(K_5)$ is the toroidal graph obtained as the quotient of $[0,5]\square[0,5]\setminus\{(0,1),(3,0),(2.2),(5,1),(1,4),(3,5)\}$ by identifying the left and right sides of the rectangular cutout $[0,5]\times[0,5]$ as well as its bottom and top sides. 
\end{corollary}

\begin{proof}
The upper-right of Figure~\ref{holes} represents the toroidal cutout $\mathbb{D}$ of $Y_{=}$, which is obtained from $[0,5]\times[0,5]$ by deleting the vertices $(x,y)=(0,1),(3,0),(2.2),(5,1),(1,4),(3,5)$, where the coordinate $x$ increases to the right and the coordinate $y$ increases downward, with the origin $O=(0,0)$ placed in the upper-left corner. Clearly, the shown restriction of $c_\square$ in the figure defines $d_{=}$. All rectangular tiles obtained by the interior-disjoint union of translated copies of $\mathbb{D}$ are the remaining toroidal cutouts in the statement producing SETFCs  on corresponding toroidal maps. By continuation of this process, in the limit an infinite 
 map covering the whole plane is obtained that contains a corresponding SETFC.  
\end{proof}

\section{SETFCs and ETFCs based on the triangular tiling}\label{tri}

\begin{figure}[htp]
\includegraphics[scale=0.58]{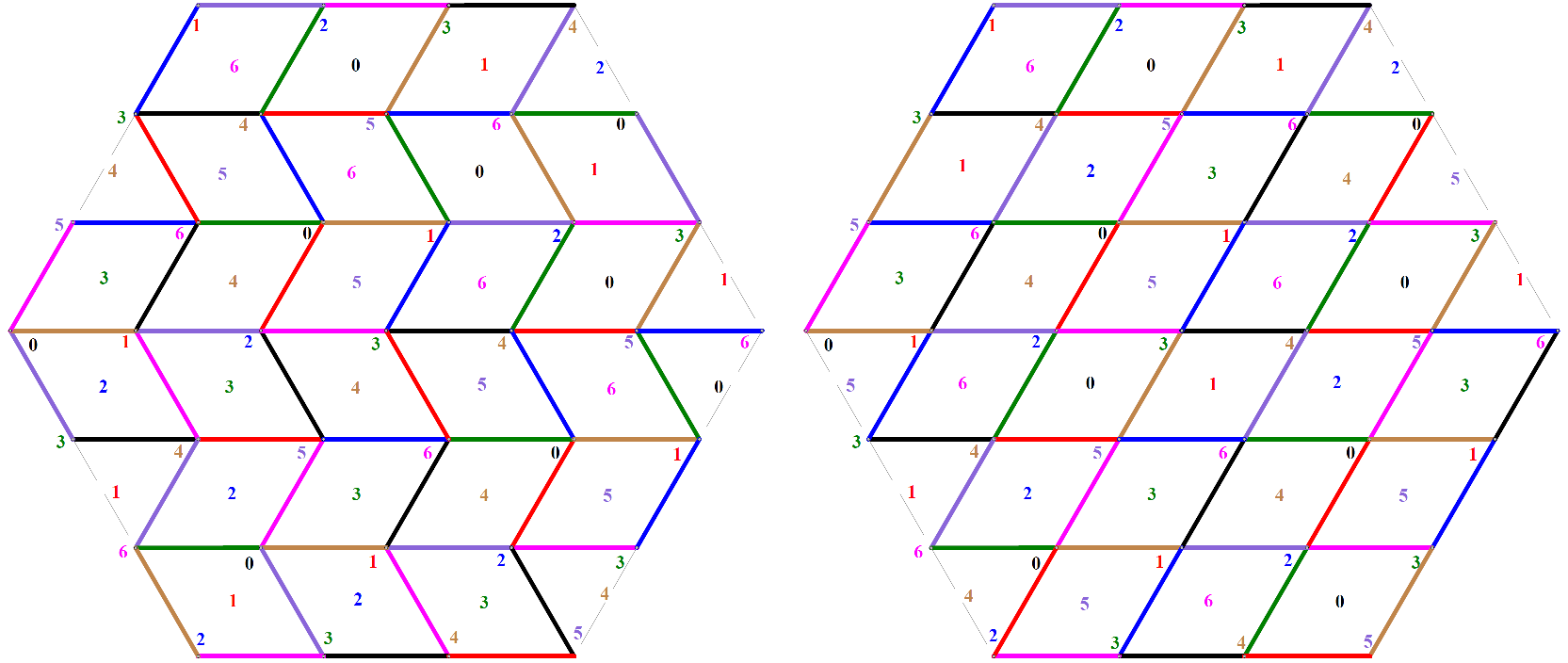}
\caption{ETFCs mod 7 for square tiling based on the triangular tiling.}
\label{mod7}
\end{figure}

\begin{theorem}\label{tdelta}
There are SETFCs in plane and toroidal maps based on the triangular tiling of $\mathbb{R}^2$ via rectangular tiles that are copies of  $[0,7p]\times[0,q]$, where $0<p,q\in\mathbb{Z}$, each such copy taken as a map obtained from $P_{7p+1}\square P_{q+1}$ by adding the main-diagonal edges of its 4-cycle squares, including their continuation, or concatenation, over all of $\mathbb{R}^2$. 
Let  $Y_\Delta$ be the smallest such map. Then $\sigma(Y_\Delta)=K_7$ is the toroidal graph obtained by identifying the left and right sides of the rectangular cutout $[0,7]\times[0,1]$ as well as its bottom and top sides but with its top side translated two units to the right with respect to its bottom side. Moreover, if $d_\Delta$ is an SETFC of $Y_\Delta$, then each of $V(Y_\Delta)$, $E(Y_\Delta)$ and $F(Y_\Delta)$ use once each of the colors of the set $[7]$ in $d_\Delta$.
\end{theorem}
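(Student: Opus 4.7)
The plan is to mimic the proof of the preceding square-tiling theorem, replacing the $4$-regular square tiling by the $6$-regular triangular one and finding the right modular edge shifts. First I would define the planar $2$-cell complex $X_\Delta$ with $X_\Delta^0=\mathbb{Z}^2$, with $1$-cells the horizontal edges $h(x,y)=\{(x,y),(x+1,y)\}$, the vertical edges $v(x,y)=\{(x,y),(x,y+1)\}$ and the main-diagonal edges $d(x,y)=\{(x,y),(x+1,y+1)\}$, and with $2$-cells the upper triangle $T_u(x,y)=\{(x,y),(x,y+1),(x+1,y+1)\}$ and the lower triangle $T_l(x,y)=\{(x,y),(x+1,y),(x+1,y+1)\}$ obtained by splitting each unit square along its main diagonal; the CW realization $\Psi(X_\Delta)$ is assembled by taking open convex hulls exactly as in the proof of the preceding theorem. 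Writing $s=x+2y\bmod 7$, the candidate SETCC $c_\Delta$ on $X_\Delta$ is
\[
c_\Delta(x,y)\equiv s,\quad c_\Delta(h(x,y))\equiv s+4,\quad c_\Delta(v(x,y))\equiv s+1,\quad c_\Delta(d(x,y))\equiv s+5 \pmod{7},
\]
together with $c_\Delta(T_u(x,y))\equiv s+4$ and $c_\Delta(T_l(x,y))\equiv s+6\pmod{7}$.

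The next step is to verify the ETC and SETCC conditions directly from these formulas. The six neighbor offsets of $(x,y)$ in $X_\Delta$ are $\pm(1,0)$, $\pm(0,1)$, $\pm(1,1)$, which shift $s$ by $\pm 1,\pm 2,\pm 3\pmod{7}$; these exhaust $\{1,\ldots,6\}$, so $c_\Delta|_{N[(x,y)]}$ is a bijection onto $[7]$, giving Definition~\ref{ahora}(a), and independence of each color class then yields~(b). The six edges at $(x,y)$ receive shifts $\{4,3,1,6,5,2\}=\{1,\ldots,6\}\pmod{7}$, so they carry exactly the colors of $[7]\setminus\{c_\Delta(x,y)\}$ and each edge color differs from both its endpoint colors. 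The boundary of $T_u(x,y)$ carries the six colors $\{s,s+1,s+2,s+3,s+5,s+6\}$, missing $s+4$, and the boundary of $T_l(x,y)$ carries $\{s,s+1,s+2,s+3,s+4,s+5\}$, missing $s+6$; so each face is assigned the color missing from its boundary, meeting the requirement of Definition~\ref{que}. Three short case checks then finish the SETCC verification: the two triangles sharing an edge of any one of the three types $h,v,d$ receive different face colors, and the six faces meeting at any vertex $(x,y)$ realize the shift set $s+\{1,\ldots,6\}$, so they are pairwise distinct in color and different from $c_\Delta(x,y)$, which is the strictness condition.

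Next I would pass to the toroidal quotient. The coloring $c_\Delta$ is invariant precisely under the sublattice $\Lambda=\{(a,b)\in\mathbb{Z}^2:a+2b\equiv 0\pmod{7}\}$ of $\mathbb{Z}^2$, of index $7$ and with basis $\{(7,0),(-2,1)\}$. Taking the rectangle $[0,7]\times[0,1]$ as fundamental region, the generator $(7,0)$ produces the identification of the left and right sides, while the generator $(-2,1)$ produces the identification of the top and bottom sides with the top edge translated two units right of the bottom, matching the theorem statement. The six lifts of the neighbors of a quotient vertex $(x,0)$ have offsets $\pm 1,\pm 2,\pm 3\pmod{7}$, all distinct, so $\sigma(Y_\Delta)=K_7$; thus $|Y_\Delta^0|=7$ and $|Y_\Delta^1|=21$, and Euler's formula on the torus gives $|Y_\Delta^2|=14$. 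Because $x+2y\bmod 7$ is a bijection on the seven quotient vertices, the colors of $[7]$ appear on $Y_\Delta^0$, $Y_\Delta^1$, $Y_\Delta^2$ with multiplicities $1$, $3$, and $2$ respectively, so every color of $[7]$ is used at each level, as claimed.

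The main obstacle in this program is not the toroidal identification (which is forced once $\Lambda$ has been pinned down by the period of the vertex coloring), but the simultaneous choice of the three edge-shift constants $(\alpha_h,\alpha_v,\alpha_d)$. They must satisfy the at-vertex condition $\{\alpha_h,\alpha_h-1,\alpha_v,\alpha_v-2,\alpha_d,\alpha_d-3\}=\{1,\ldots,6\}\pmod{7}$ together with the in-face conditions that no edge-shift collides with a vertex-shift inside either $T_u(x,y)$ or $T_l(x,y)$. This is a small combinatorial search in $(\mathbb{Z}/7\mathbb{Z})^3$, which yields the valid triple $(\alpha_h,\alpha_v,\alpha_d)=(4,1,5)$ used above; once this triple is fixed, every remaining step is a direct case check that parallels the square-tiling proof.
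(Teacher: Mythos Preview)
Your proof is correct and follows essentially the same approach as the paper: define the planar complex $X_\Delta$, give an explicit $\bmod\ 7$ coloring formula for vertices, edges, and faces, and then pass to the toroidal quotient determined by the period lattice. Your choice $s=x+2y$ and shift triple $(\alpha_h,\alpha_v,\alpha_d)=(4,1,5)$ differs from the paper's $x+5y$ formula (display~(\ref{sq2})), but since $5\equiv -2\pmod 7$ the two colorings are related by a reflection and both are valid SETCCs; your verification of the ETC, face-missing-color, edge-adjacency and vertex-strictness conditions is in fact more explicit than the paper's, which relies largely on the figure and display~(\ref{dec26}).
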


\begin{proof}
Consider the map $M_\Delta$ provided in $\mathbb{R}^2$ with
 
$$\begin{array}{ll}
V(M_\Delta)\!=\{(x,y); x,y\in\mathbb{Z}\}\subset\mathbb{R}^2,\\
 E(M_\Delta)\!=\{\{(x,y),(x+1,y)\};x,y\in\mathbb{Z}\}\cup&\{\{(x,y),(x,y+1)\};x,y\in\mathbb{Z}\}\cup\\
 &\{(x+1,y),(x,y+1)\};x,y\in\mathbb{Z}\}\mbox{ and }\\
 F(M_\Delta)\!=\{\{(x,y),(x+1,y),(x,y+1)\}\cup&\{(x+1,y),(x,y+1),(x+1,y+1)\};\!x,\!y\!\in\mathbb{Z}\},\\
\end{array}$$
where $E(M_\Delta)$ is formed by the horizontal and vertical sides and the main-diagonals of the unit squares $[(x,y),(x+1,y)]\times[(x,y),(x,y+1)]$, with $(x,y)\in\mathbb{Z}$. 

 An SETFC $c_\Delta$ in $M_\Delta$ is given periodically by setting
\begin{eqnarray}\label{sq2}\begin{array}{l}
c_\Delta(x,y)=(x+5y)\mod 7,\mbox{ for }(x,y)\in V(M_\Delta)=\mathbb{Z}^2,\\  
c_\Delta(\{(x,y),(x+1,y)\})=(4+x+5y)\mod 7,\mbox{ for }\{(x,y),(x+1,y)\}\in E(M_\Delta),\\
c_\Delta(\{(x,y),(x,y+1)\})=(6+x+5y)\mod 7,\mbox{ for }\{(x,y),(x,y+1)\}\in E(M_\Delta),\\
c_\Delta(\{(x,y),(x+1,y+1)\})=(3+x+5y)\mod 7,\\
\;\;\mbox{ for }\{(x,y),(x+1,y+1)\}\in E(M_\Delta),\\
c_\Delta(\{(x,y),(x+1,y),(x,y+1)\})=(2+x+5y)\mod 7,\\\;\;\mbox{ for }\{(x,y),(x+1,y),(x,y+1)\}\in F(M_\Delta).\\
c_\Delta(\{(x+1,y),(x,y+1),(x+1,y+1)\})=(4+x+5y)\mod 7,\\
\;\;\mbox{ for }\{(x+1,y),(x,y+1),(x+1,y+1)\}\in F(M_\Delta).\\
\end{array}\end{eqnarray}
This SETFC $c_\Delta$ is doubly periodic, illustrated on the right side of Figure~\ref{cubo}, where coordinate axes are horizontal for $x$ and tilted at an angle of 60 degrees or $\frac{\pi}{2}$ radians (said to be ``anti-diagonal") for $y$, with both horizontal and tilted periods equal to $7$. 
Colors in such representation of $c_\Delta$ are given orderly from 0 to 6 as black, red, blue, green, hazel, violet and rose, respectively.
A double periodic tile of $c_\Delta$ in $M_\Delta$ is represented in display (\ref{dec6}), where coordinate $x$ increases to the right and coordinate $y$ increases upward, with the origin $O=(0,0)$ taken as the leftmost vertex.

\begin{eqnarray}\label{dec26}\begin{array}{c}
5\hspace*{3.4mm} _-^2\hspace*{3.4mm}6\hspace*{3.4mm} _-^3\hspace*{3.4mm}0\hspace*{3.4mm} _-^4\hspace*{3.4mm}1\hspace*{3.4mm}_-^5\hspace*{3.4mm}2\hspace*{3.4mm}_-^6\hspace*{3.4mm}3
\hspace*{3.4mm} _-^0\hspace*{3.4mm}4\hspace*{3.4mm} _-^1\hspace*{3.4mm}5
\\
_6|\hspace*{1.4mm}_23^4\hspace*{1.4mm}_0|\hspace*{1.4mm}_34^5\hspace*{1.4mm}_1|\hspace*{1.4mm}_45^6\hspace*{1.4mm}_2|\hspace*{1.4mm}_56^0\hspace*{1.4mm}_3|\hspace*{1.4mm}_60^1\hspace*{1.4mm}|_4
\hspace*{1.4mm}_01^2\hspace*{1.4mm}_5|\hspace*{3.0mm}_12^3\hspace*{1.5mm}_6|
\\
0\hspace*{3.4mm} _-^4\hspace*{3.4mm}1\hspace*{3.4mm} _-^5\hspace*{3.4mm}2\hspace*{3.4mm} _-^6\hspace*{3.4mm}3\hspace*{3.4mm}_-^0\hspace*{3.4mm}4\hspace*{3.4mm}_-^1\hspace*{3.4mm}5
\hspace*{3.4mm} _-^2\hspace*{3.4mm}6\hspace*{3.4mm} _-^3\hspace*{3.4mm}0
\\
\end{array}\end{eqnarray}

\begin{figure}[htp]
\includegraphics[scale=1.48]{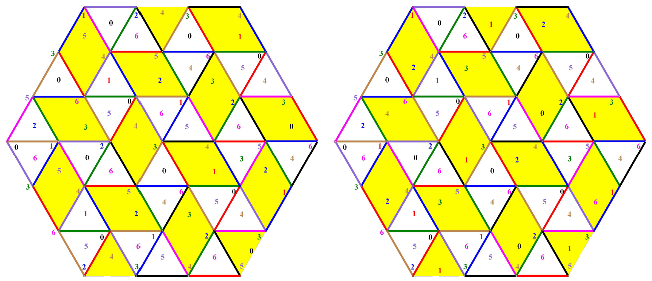}
\caption{Two SETFCs for C\&R $3^2.4.3.4$ tiling based on the triangular tiling.}
\label{7-5}
\end{figure}

Display (\ref{dec26}) shows a union $\mathcal{T}$ of fourteen faces of a map of $\mathbb{R}^2$ obtained by double periodic translations of $\mathcal{T}$. 
Each of the seven 4-cycles of $\mathcal{T}$ formed by two vertical edges and two horizontal edges is assumed divided into two triangles separated by the main diagonal taken as an edge represented just by its color number $\xi$; this $\xi$ together with the four vertex color numbers are shown larger than the four edge color numbers, while the color numbers of the two triangles separated by $\xi$ appear as a pre-subscript and a post-superscript of such $\xi$.
Note that $\mathcal{T}$ can be taken as a rectangular cutout of the $Y_\Delta$ in the statement in the following sense and bearing the corresponding $c_\Delta$.  
It is easy to see that $\mathcal{T}$ is equivalent to an horizontal strip of fourteen triangles in the extension of $\mathcal{T}$ to all of $\mathbb{R}^2$, partilaly seen in Figure~\ref{cubo}.
By identifying the left and right sides of $\mathcal{T}$ and then its top and bottom sides, with the top side displaced two units to the right with respect to the bottom side, $Y_\Delta$ and $c_\Delta$ are obtained in the toroid $\mathbb{T}$ via the corresponding quotient map $\phi_\Delta:\mathbb{R}^2\rightarrow\mathbb{T}$. 
\end{proof}

\begin{corollary}\label{coro}
There are SETFCs in plane and toroidal maps based on the Archimedean uniform {\rm C\&R} $3^3.4^2$ tiling of $\mathbb{R}^2$ via rectangular tiles that are the interior-disjoint unions of translated copies of $[0,7]\times[0,2]$ and their continuation, or concatenation, over all of $\mathbb{R}^2$. Let $Y_{\neq}$  be the smallest such map with corresponding SETFC $c_{\neq}$. Then, $\sigma(Y_{\neq})$ is obtained on the toroid by identifying the left and right sides of $[0,7]\times[0,2]\setminus\{\{(j,0),(j+1,1)\};j\in[7]\}$ as well as its bottom and top sides, with the top side displaced four units to the right with respect to the bottom side. 
\end{corollary}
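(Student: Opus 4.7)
The plan is to construct $Y_{\neq}$ and its SETCC by a local surgery on the complex $X_\Delta$ and its coloring $c_\Delta$ from Theorem~\ref{tdelta}. First, double the height of the fundamental domain used for $Y_\Delta$, passing from $[0,7]\times[0,1]$ to $[0,7]\times[0,2]$: since the horizontal shift in the top-to-bottom identification for $Y_\Delta$ is $2$ and $c_\Delta$ is doubly periodic, stacking two unit-height copies yields a compatible toroidal quotient with total shift $4$, as asserted in the statement of the corollary.

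Next, delete from this complex the seven diagonal edges lying in the lower strip $[0,7]\times[0,1]$ (with their horizontal periodic translates). Each deletion merges the pair of triangular 2-cells that shared that diagonal into a single quadrilateral 2-cell, so the lower strip becomes a row of seven squares while the upper strip $[0,7]\times[1,2]$ retains its fourteen triangles. The resulting CW decomposition realizes the elongated triangular tiling C\&R $3^3.4^2$, with vertex figure $3.3.3.4.4$ at every interface vertex and unchanged vertex figure $3^6$ in the upper-strip interior.

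Define $d_{\neq}$ by keeping $c_\Delta$ on all vertices, on all remaining edges, and on the upper-strip triangle faces, and by assigning to each new square face the color that the removed diagonal edge carried in $c_\Delta$. Explicitly, the square at lower-left corner $(j,0)$ receives color $(3+j)\bmod 7$; as $j$ ranges over $\{0,1,\dots,6\}$ these seven colors exhaust $[7]$. The total-coloring condition on $\sigma(Y_{\neq})$ persists automatically after edge deletion, and the boundary-vs-face equation on unchanged triangles is inherited from $c_\Delta$. For each merged square, a direct substitution in equation~(\ref{sq2}) yields vertex colors $\{j,j+1,j+5,j+6\}$ and remaining edge colors $\{j,j+2,j+4,j+6\}$, whose union modulo $7$ is precisely $[7]\setminus\{j+3\}$; hence $(3+j)\bmod 7$ is indeed the unique color missing from the new boundary, as the definition of ETCC requires.

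The main obstacle will be the verification of strictness at vertices lying on the interface $y=1$, where cells of three types (two species of upper triangle together with one lower square) meet. The plan is to tabulate, at an arbitrary interface vertex $(x,1)$, the colors of all five incident 2-cells as residues modulo $7$: the three incident upper triangles contribute $\{x,x+1,x+6\}$ and the two incident lower squares contribute $\{x+2,x+3\}$, which together give five pairwise distinct residues, also distinct from the vertex color $(x+5)\bmod 7$. Analogous and simpler residue-class checks at vertices interior to each strip (inherited from $c_\Delta$ above the interface, and using the fact that horizontally adjacent lower squares have colors differing by $1\bmod 7$ below it) then complete the verification that $d_{\neq}$ is indeed an SETCC on $Y_{\neq}$.
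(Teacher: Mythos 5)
Your construction is exactly the one the paper uses: delete the seven diagonals of the lower strip of the $[0,7]\times[0,2]$ tile of $(X_\Delta,c_\Delta)$, merge the triangle pairs into squares colored by the deleted diagonal's color, and pass to the toroidal quotient with shift $4$; the paper's proof simply points to the right of Figure~\ref{cubo} and asserts that the restriction of the Theorem~\ref{tdelta} coloring works. Your modular-arithmetic checks (square boundary colors $\{j,j+1,j+5,j+6\}\cup\{j,j+2,j+4,j+6\}$ missing exactly $j+3$, and the five distinct face colors at an interface vertex) are correct against equation~(\ref{sq2}) and supply the verification the paper leaves implicit in the figure.
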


\begin{proof}
The right of Figure~\ref{cubo} represents the toroidal cutout $\mathbb{D}'$ of $Y_{\neq}$, which is obtained from $[0,7]\times[0,2]$ by deleting the edges $\{(j,0),(j+1,1)\}$, for $j\in[7]$. Clearly, the shown restriction of $c_\Delta$ in the figure defines $c_{\neq}$. All rectangular tiles obtained by the interior-disjoint union of translated copies $\mathbb{D}'$ are the remaining toroidal cutouts in the statement producing SETFCs $c_{\neq}$ on corresponding toroidal maps $X_{\neq}$. By continuation of this process, in the limit an infinite map covering the whole plane is obtained that contains a corresponding SETFC.  
\end{proof}

\begin{corollary}
From the SETFCs in plane and toroidal maps obtained in Corollary~\ref{coro}, new ETFCs (resp. SETFCs) with $k=7$ in plane and toroidal maps based on the square tiling seen as a rhombic tiling are obtained by deleting in the horizontal rows of triangles all the anti-diagonal (resp. main-diagonal) edges, letting each new rhombic face resulting from the said deletion receive the color of the corresponding deleted edge.
\end{corollary}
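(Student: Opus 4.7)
The plan is to start from the SETCC $c_{\neq}$ on the $3^3.4^2$-based 2-cell complex of Corollary~\ref{coro}, and to produce a total coloring on a square-tiling-based complex by selective diagonal deletion together with a transfer of each deleted edge's color to the 2-cell it creates.

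First, I would locate the horizontal rows of contiguous triangular 2-cells in the Corollary~\ref{coro} complex; these are the rows in which each unit parallelogram is cut by a single diagonal into two triangles. Two parallel variants are then considered: in one I delete all anti-diagonal edges of those rows, in the other all main-diagonal edges. In either variant, each deletion merges two adjacent triangles into a single rhombic 2-cell $R$, and every surviving 1-cell belongs to the underlying unit square grid, so the resulting 2-cell complex is based on the square tiling. The new coloring $c'$ is then defined by keeping $c_{\neq}$ on every surviving vertex, edge, and unaltered 2-cell, and by setting $c'(R):=c_{\neq}(e_R)$ for each new rhombic 2-cell $R$ with deleted diagonal $e_R$.

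Verification then amounts to three checks. $(i)$~$c'$ is a proper total coloring of the new 1-skeleton: this is automatic, since deleting edges can never create monochromatic incidences among the elements that survive. $(ii)$~For each new rhombus $R$, the cycle skeleton $\sigma(R)$ uses a $k$-color set whose missing color is exactly $c_{\neq}(e_R)$. This is a direct arithmetic check using (\ref{sq2}): writing $\alpha:=(x+5y)\bmod 7$ for the lower-left vertex of $R$, the four boundary vertices contribute the residues $\{\alpha,\alpha+1,\alpha+5,\alpha+6\}$ and the four surviving boundary edges contribute $\{\alpha,\alpha+2,\alpha+4,\alpha+6\}$, so their union is $[7]\setminus\{\alpha+3\}$, while the deleted main-diagonal edge satisfies $c_{\neq}(e_R)=\alpha+3$. $(iii)$~Any two 2-cells adjacent along a surviving 1-cell have distinct colors: this reduces to checking that the face-color formula differs by a nonzero residue across each horizontal and vertical unit edge, which holds because $5$ is invertible modulo $7$.

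The main obstacle is the fourth condition required for the strict variant: two 2-cells sharing a vertex must have pairwise distinct colors. Here the two variants diverge. For the main-diagonal deletion, the rhombus-color formula $\alpha+3$ inherits the ``shift-by-$+3$'' symmetry of $c_{\Delta}$ from Theorem~\ref{tdelta}, so the four 2-cells meeting at any vertex of the square grid take four pairwise distinct residues modulo $7$, yielding an SETCC. For the anti-diagonal deletion, the induced rhombus-color formula carries a reflected index that forces a color coincidence at some vertex-sharing pair of 2-cells, so the strictness condition fails and one obtains only an ETCC. Separating the two variants and tracing the resulting residue arithmetic at each vertex, residue by residue modulo $7$, is the most delicate part of the argument.
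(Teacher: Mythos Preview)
Your plan is sound and considerably more explicit than the paper's own proof, which reads in its entirety: ``Immediate by inspection of the two corresponding cases illustrated in Figure~\ref{mod7}.'' Where the paper simply points to a picture, you carry out the residue arithmetic modulo $7$; your checks (i)--(iii) and the vertex-strictness verification for the main-diagonal variant are all correct as written. In that sense your argument genuinely improves on the paper's.

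Two places deserve tightening. First, the anti-diagonal (ETCC-only) variant is left at the same level of informality as the paper: you assert that a ``reflected index\ldots forces a color coincidence at some vertex-sharing pair of $2$-cells'' without exhibiting one. To make this airtight you should write down the face-color formula for that variant explicitly and name two corner-adjacent squares receiving the same residue modulo $7$, and also confirm that edge-adjacent faces still differ so that the ETCC condition does hold. Second, the $3^3.4^2$ complex of Corollary~\ref{coro} already contains a horizontal row of square $2$-cells with pre-assigned face colors; your computation in (ii)--(iv) treats every square as newly created. You should check that your rule $c'(R)=c_{\neq}(e_R)$ agrees with those pre-existing face colors (it does, since those squares were themselves produced by the same diagonal-deletion mechanism), so that the old and new rows of squares patch consistently into a single coloring of the full square tiling.
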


\begin{proof} As illustrated on the left side of Figure~\ref{mod7}, the first instance in the statement has the face colors around each vertex having a common pair of opposite face colors, so this instance yields an ETFC, not an SETFC. As illustrated on the right side of the figure, the second, parenthesized, instance has the colors of the four faces sharing any common vertex being pairwise distinct, so this, more than en ETFC, an SETFC.  
\end{proof}

\begin{corollary}
From the SETFCs in plane and toroidal maps obtained in Theorem~\ref{tdelta}, new ETFCs with $k=7$ in 
plane and toroidal maps based on  the {\rm C\&R} $3^2.4.3.4$ Archimedean uniform tiling are obtained by deleting edges periodically mod 7 and selecting, for each new resulting rhombic face, one of the two old triangle color numbers as its new color.
\end{corollary}

\begin{proof} 
Figure~\ref{7-5} lacks edges from the map on the left side of Figure~\ref{cubo} that in fact must be deleted to obtain the claimed plane Archimedean uniform tiling. The figure shows two complementary instances of how to select in each new resulting rhombic face one of the two old triangle color numbers to obtain an ETFC that is not SETFC. 
\end{proof}

\section{SETFCs based on the hexagonal tiling}\label{ss3}

The hexagonal tiling of $\mathbb{R}^2$ \cite{Branko} appears in the following context.
The {\it dual map} $M^*$ of a planar graph embedding $M$ is obtained by setting $V(M^*)$ as the set of faces of $M$ and an edge $e^*=\{f^*,g^*\}$ of $E(M^*)$ for each edge $e$ of $M$ adjacent to faces $f,g$ of $M$. For example, the triangular tiling yields a map $M$ whose vertex set is composed by the vertices of the triangular tiles and its edge set is composed by the edges of the triangular tiles. The dual graph $M^*$ of $M$ will be said to be {\it barycentric} if it can be seen (geometrically) as having the barycenters of the triangular tiles of $M$ as its vertices and the segments joining the barycenters of adjacent triangular tiles as its edges. Clearly, $M^*$ is the map of the hexagonal tiling \cite{Branko}

Now, let the toroid $\mathbb{T}$ be obtained by identifying in parallel the left and right sides of the rectangle $[0,14p]\times[0,2q]$ as well as its top and bottom sides, with the top side displaced two units to the right with respect to the bottom side. Let $\phi:\mathbb{R}^2\rightarrow\mathbb{T}$ be the double-periodic projection map of horizontal period 14 and vertical period 2. Given a map $M$ in $\mathbb{R}^2$ that is double periodic with horizontal period 14 and vertical period 2, its barycentric dual map $M^*$ becomes periodic in the same fashion as that of $M$. Then, the quotient map $\phi(M^*)$ is said to be the {\it toroidal dual graph} of $\phi(M)$.

\begin{figure}[htp]
\includegraphics[scale=0.67]{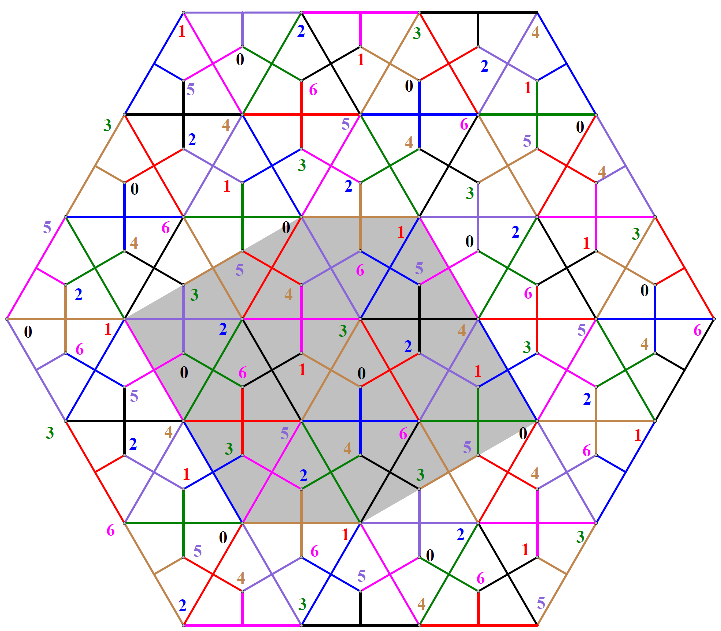}
\includegraphics[scale=0.56]{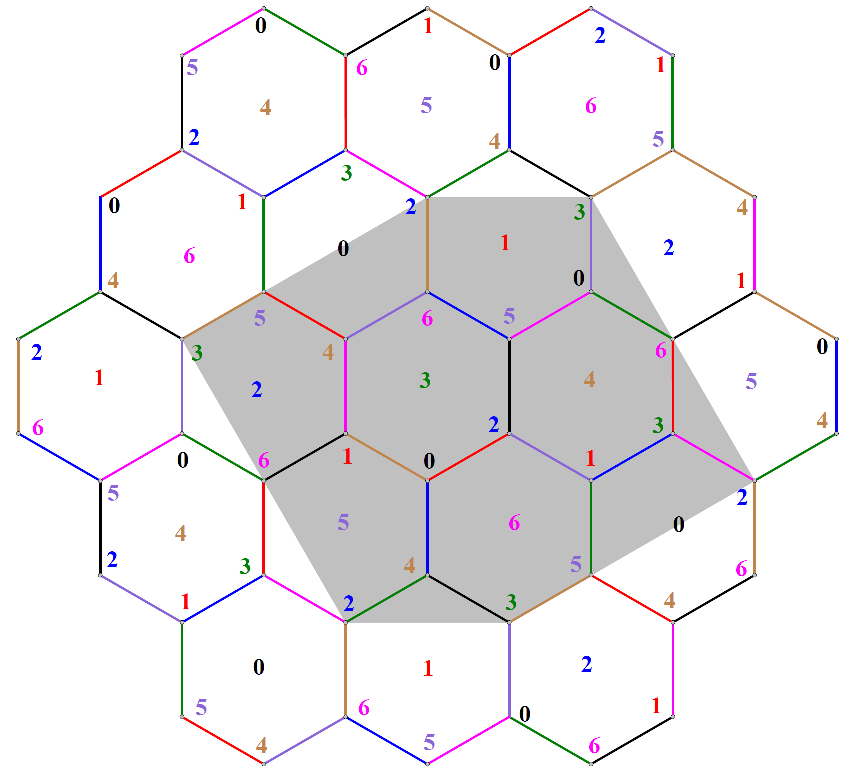}
\caption{SETFCs based on the triangular and hexagonal tilings.}
\label{tubo}
\end{figure}

\begin{theorem}\label{tnabla}
There are SETFCs in toroidal maps based on the hexagonal tiling of $\mathbb{R}^2$ via tiles that are copies of $[0,14p]\times[0,2q]$, where $0<p,q\in\mathbb{Z}$, each such copy taken as a map obtained from $P_{14p+1}\square P_{2q+1}$ by removing alternate vertical edges to yield 6-belts of horizontal length 2 and vertical length 1, including their continuation, or concatenation, over all of $\mathbb{R}^2$. The resulting maps $M_\nabla$ have graphs $\sigma(M_\nabla)$ that are cubic maps.  
The smallest such map $Y_\nabla$ is the toroidal dual map of the smallest $Y_\Delta$ in Theorem~\ref{tdelta}. Its 1-skeleton $\sigma(Y_\nabla)$ coincides with the Heawood graph \cite{from}.
\end{theorem}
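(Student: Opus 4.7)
The plan is to construct $X_\nabla$ and its SETCC $c_\nabla$ as the barycentric dual of the triangular 2-cell complex $X_\Delta$ and the SETCC $c_\Delta$ built in Theorem~\ref{tdelta}. Explicitly, each triangular 2-cell $T$ of $X_\Delta$ becomes a 0-cell $T^*$ of $X_\nabla$ at its barycenter, each 1-cell $e$ of $X_\Delta$ becomes a 1-cell $e^*$ of $X_\nabla$ joining the barycenters of the two triangles sharing $e$, and each 0-cell $v$ of $X_\Delta$ becomes a hexagonal 2-cell $H_v$ of $X_\nabla$ whose cycle skeleton is assembled from the six triangles around $v$ (now vertices of $H_v$) alternating with the six edges at $v$ (now edges of $H_v$). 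This yields the classical hexagonal tiling of $\mathbb{R}^2$; rewriting it as the brick-wall subgraph of $P_{14p+1}\square P_{2q+1}$ obtained by removing alternate vertical edges is then a routine change of coordinates. Transport the coloring by $c_\nabla(T^*)=c_\Delta(T)$, $c_\nabla(e^*)=c_\Delta(e)$, and $c_\nabla(H_v)=c_\Delta(v)$.

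I would then verify each SETCC axiom for $c_\nabla$ by pulling it back through this duality. Cubicity of $\sigma(X_\nabla)$ is immediate, since every triangle of $X_\Delta$ has three sides. The local TC condition at a 0-cell $T^*$, asserting that $c_\nabla(T^*)$ together with the colors of the three 1-cells incident to $T^*$ gives four distinct elements of $[7]$, is just a restatement of the TC property for the triangle $T$ in $X_\Delta$. The hexagon-boundary condition at $H_v$ requires that its twelve boundary labels (six vertices and six edges) use exactly the six colors in $[7]\setminus\{c_\Delta(v)\}$, leaving $c_\Delta(v)$ to be assigned to $H_v$: the six edges at $v$ carry six distinct colors different from $c_\Delta(v)$ by the ETC bijection $c_\Delta|_{N[v]}\leftrightarrow [7]$, while the six triangles around $v$ carry six distinct face colors different from $c_\Delta(v)$ by the strict SETCC hypothesis applied at $v$; both six-element subsets of $[7]\setminus\{c_\Delta(v)\}$ must therefore coincide with $[7]\setminus\{c_\Delta(v)\}$ itself. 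Finally, two hexagons sharing a 1-cell (respectively, only a 0-cell) in $X_\nabla$ correspond to two 0-cells of $X_\Delta$ that are adjacent (respectively, both lie in a common triangle), hence receive distinct colors under $c_\Delta$, so the strict-SETCC part of the definition follows.

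For the smallest instance, $\sigma(Y_\Delta)=K_7$ has $7$ vertices and $\binom{7}{2}=21$ edges, and Euler's formula on the torus forces $|Y_\Delta^2|=14$; hence the toroidal dual $Y_\nabla$ has $14$ vertices, $21$ edges and $7$ hexagonal 2-cells, and $\sigma(Y_\nabla)$ is cubic with every face of length $6$. Since no vertex of $K_7$ has degree $3$, $4$ or $5$, no three, four, or five triangles of $Y_\Delta$ can form a cycle in the dual, so $\sigma(Y_\nabla)$ has girth exactly $6$. A cubic graph on $14$ vertices with girth $6$ meets the Moore bound $2(1+2+4)=14$ and is thus a $(3,6)$-cage; uniqueness of the $(3,6)$-cage identifies $\sigma(Y_\nabla)$ with the Heawood graph, matching the classical identification of the Heawood graph as the toroidal dual of the minimum-genus triangular embedding of $K_7$.

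The main obstacle I anticipate is the hexagon-boundary verification: one must argue not only that each of the six edge colors and each of the six vertex colors around $H_v$ lies in $[7]\setminus\{c_\Delta(v)\}$, but that these two six-element families each exhaust that six-element set; both facts together require the ETC bijection at $v$ \emph{and} the strictness of $c_\Delta$ applied to the six triangles containing $v$, and an oversight of either ingredient would leave the dual coloring failing the SETCC definition.
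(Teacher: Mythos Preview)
Your approach matches the paper's: both build $(X_\nabla,c_\nabla)$ as the barycentric dual of $(X_\Delta,c_\Delta)$, transporting colors exactly as you write ($c_\nabla(T^*)=c_\Delta(T)$, $c_\nabla(e^*)=c_\Delta(e)$, $c_\nabla(H_v)=c_\Delta(v)$), and then pass to the toroidal quotient. Where the paper verifies the SETCC by pointing to an explicit $[0,14]\times[0,2]$ tile and accompanying figures, you carry out the verification conceptually by pulling each axiom back through the duality, and where the paper simply asserts that $\sigma(Y_\nabla)$ ``coincides clearly with the Heawood graph,'' you supply a $(3,6)$-cage argument.

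One small gap worth flagging in that last step: your girth-$6$ justification, ``no vertex of $K_7$ has degree $3$, $4$ or $5$,'' only rules out \emph{contractible} short cycles in the dual (those encircling a single primal vertex); it says nothing about non-contractible ones on the torus. A clean fix is to observe first that the dual is bipartite (up-triangles versus down-triangles), forcing even girth, and then that a $4$-cycle $T_1T_2T_3T_4$ in the dual would force $T_1$ and $T_3$ to share an edge in $K_7$, contradicting bipartiteness. Also, in your TC verification at $T^*$ you check vertex--edge and edge--edge incidences but should record separately that adjacent dual vertices $T^*,T_i^*$ get distinct colors; this is exactly the primal ETCC face-adjacency condition, which you invoke later for hexagons but not here.
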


\begin{proof}
The dual map $M_\nabla$ of $M_\Delta$ in the proof of Theorem~\ref{tdelta} provides an example of SETFC.
A partial superposed representation of both $M_\Delta$ and $M_\nabla$ is given on the left side of Figure~\ref{tubo}, showing in $M_\nabla$ the SETFC $c=c_\nabla$ {\it dual} to the SETFC $c_\Delta$ of $M_\Delta$, where colors, of vertices, edges and faces of $c_\Delta$ become the colors of the corresponding faces, edges and vertices of $c_\nabla$, respectively. The right side of Figure~\ref{tubo} represents just the part of $M_\nabla$ colored according to $c_\nabla$, as on the left of the figure. 

Display (\ref{dec6}) represents a tile $\mathbb{L}=[0,14]\times[0,2]\subset\mathbb{R}^2$ in which $M_\nabla$ is represented, where the tilted and vertical edges in the right of Figure~\ref{tubo} are represented horizontally and vertically in $\mathbb{L}$, respectively, and vertex-and-face color numbers are shown larger than edge color numbers.
By identifying the left and right vertical sides of $\mathbb{L}$ in parallel and then identifying the horizontal top and bottom sides of $\mathbb{L}$, with the bottom side displaced four units to the right with respect to the top side, a toroid $\mathbb{T}$ is obtained, with $M_\nabla$ and $c_\nabla$ mapped via a corresponding quotient map  $\phi:\mathbb{R}^2\rightarrow\mathbb{T}$ onto a toroidal map $Y_\nabla$ and a corresponding quotient SETFC $d_\nabla$. 

\begin{eqnarray}\label{dec6}\begin{array}{ccc}
2\hspace*{2.2mm}^-_3\hspace*{2.2mm}4\hspace*{2.2mm}^-_0\hspace*{2.2mm}
3\hspace*{2.2mm}^-_4\hspace*{2.2mm}5\hspace*{2.2mm}^-_1\hspace*{2.2mm} 
4\hspace*{2.2mm}^-_5\hspace*{2.2mm} 6\hspace*{2.2mm}^-_2\hspace*{2.2mm}
5\hspace*{2.2mm}^-_6\hspace*{2.2mm}0\hspace*{2.2mm}^-_3\hspace*{2.2mm}
6\hspace*{2.2mm}^-_0\hspace*{2.2mm}1\hspace*{2.2mm}^-_4\hspace*{2.2mm} 
0\hspace*{2.2mm}^-_1\hspace*{2.2mm} 2\hspace*{2.2mm}^-_5\hspace*{2.2mm}
1\hspace*{2.2mm}^-_2\hspace*{2.2mm} 3\hspace*{2.2mm}^-_6\hspace*{2.2mm}2
\\
\!_4|\hspace*{6.5mm}1\hspace{6.5mm}_5|\hspace*{6.5mm}2\hspace{6.5mm}_6|\hspace*{6.5mm}3\hspace{6.5mm}_0|\hspace*{6.5mm}4\hspace{6.5mm}_1|\hspace*{6.5mm}5\hspace{6.5mm}_2|\hspace*{6.5mm}6\hspace{6.5mm}_3|\hspace*{6.5mm}0\hspace{6.5mm}_4|\\
6\hspace*{2.2mm}^-_2\hspace*{2.2mm}5\hspace*{2.2mm}^-_6\hspace*{2.2mm}
0\hspace*{2.2mm}^-_3\hspace*{2.2mm}6\hspace*{2.2mm}^-_0\hspace*{2.2mm} 
1\hspace*{2.2mm}^-_4\hspace*{2.2mm} 0\hspace*{2.2mm}^-_1\hspace*{2.2mm}
2\hspace*{2.2mm}^-_5\hspace*{2.2mm}1\hspace*{2.2mm}^-_2\hspace*{2.2mm}
3\hspace*{2.2mm}^-_6\hspace*{2.2mm}2\hspace*{2.2mm}^-_3\hspace*{2.2mm} 
4\hspace*{2.2mm}^-_0\hspace*{2.2mm} 3\hspace*{2.2mm}^-_4\hspace*{2.2mm}
5\hspace*{2.2mm}^-_1\hspace*{2.2mm} 4\hspace*{2.2mm}^-_0\hspace*{2.2mm}6
\\
3\hspace{6.5mm}_0|\hspace*{6.5mm}4\hspace{6.5mm}_1|\hspace*{6.5mm}5\hspace{6.5mm}_2|\hspace*{6.5mm}6\hspace{6.5mm}_3|\hspace*{6.5mm}0\hspace{6.5mm}_4|\hspace*{6.5mm}1\hspace{6.5mm}_5|\hspace*{6.5mm}2\hspace{6.5mm}_6|\hspace*{6.5mm}3\\
0\hspace*{2.2mm}^-_1\hspace*{2.2mm}2\hspace*{2.2mm}^-_5\hspace*{2.2mm}
1\hspace*{2.2mm}^-_2\hspace*{2.2mm}3\hspace*{2.2mm}^-_6\hspace*{2.2mm} 
2\hspace*{2.2mm}^-_3\hspace*{2.2mm} 4\hspace*{2.2mm}^-_0\hspace*{2.2mm}
3\hspace*{2.2mm}^-_4\hspace*{2.2mm}5\hspace*{2.2mm}^-_1\hspace*{2.2mm}
4\hspace*{2.2mm}^-_5\hspace*{2.2mm}6\hspace*{2.2mm}^-_2\hspace*{2.2mm} 
5\hspace*{2.2mm}^-_6\hspace*{2.2mm} 0\hspace*{2.2mm}^-_3\hspace*{2.2mm}
6\hspace*{2.2mm}^-_0\hspace*{2.2mm} 1\hspace*{2.2mm}^-_4\hspace*{2.2mm}0 
\\
\end{array}\end{eqnarray}
Here, $Y_\nabla$ coincides clearly with the  Heawood graph and can be interpreted as toroidal dual of $Y_\Delta$, and the SETFC $d_\nabla$ can be interpreted as the toroidal dual of the SETFC $d_\Delta$.
\end{proof}

\begin{remark}
In each of both the left and right sides of Figure~\ref{tubo}, a region is shaded in light gray color representing a tile in the plane that yields, by adequate identification of opposite edges (see the paragraph following this remark), a representation of the toroid containing,  on the left side a detached superposed representation of both $(Y_\Delta,d_\Delta)$ and $(Y_\nabla,d_\nabla)$, and on the right side the sole detached representation of $(Y_\nabla,d_\nabla)$. This last representation yields the 7-coloring of the unit distance graph with vertex set $\mathbb{R}^2$, obtained by modifying adequately $d_\Delta(\sigma(Y_\Delta))$, namely the colors of the vertices and edges of $Y_\Delta$, which is used to establish 7 as the upper bound in the Hadwiger-Nelson problem \cite[pp. 150--152]{Jensen}, \cite{Soifer}.

To obtain $(Y_\nabla,d_\nabla)$ from $(M_\nabla,c_\nabla)$, note that on the border of the shaded area on the right of Figure~\ref{tubo} the following pairs of paths must be identified,  where subindex numbers represent edge colors between corresponding endvertex color numbers:
\begin{enumerate} 
\item the bottom and top left-to-right 3-paths, both with color sequence $3_62_34_03$;
\item the right and left downward 4-paths, both with color sequence $3_50_36_13$;
\item the lower-right and upper-left downward 3-paths, with common color sequence $3_21_35_43$.
\end{enumerate}
\end{remark}

$(Y_\nabla,d_\nabla)$ is also represented as in the three images on the left half of Figure~\ref{hubo}. The top of these three images bears de vertex-and-face color numbers and edge colors as in the right of Figure~\ref{tubo} on say the upper side of the image of a toroid, also represented schematically with full face colors downward to the left, and on the hidden side of the toroid further to the right.

\section{SETFCs based on the trihexagonal tiling}\label{hex}

In this section, we determine SETFCs based on the trihexagonal tiling \cite{Branko} of $\mathbb{R}^2$.

\begin{theorem}
There are SETFCs in plane and toroidal maps based on the trihexagonal tiling of $\mathbb{R}^2$ via rectangular tiles that are the interior-disjoint union of translated copies of $[0,14]\times[0,2]$, each such copy taken as a map obtained from $P_{15}\square P_3$ by removing, in each one of the seven contiguous copies of $P_3\square P_3$ in it, the copy of $K_{1,4}$ interior to $P_3\square P_3$ and adding two main-diagonal edges, resulting into a submap composed by two triangles and a 6-cycle separating them.  
The resulting maps have their 1-skeletons as the line graphs of those 1-skeletons resulting from Theorem~\ref{tnabla}. 
\end{theorem}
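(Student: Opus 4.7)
The plan is to derive the required trihexagonal 2-cell complex (call it $X_\odot$) together with its SETCC directly from the hexagonal pair $(X_\nabla,c_\nabla)$ of Theorem~\ref{tnabla} via a line-graph construction.

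First I would define $X_\odot$ combinatorially by declaring $\sigma(X_\odot):=L(\sigma(X_\nabla))$ and introducing two types of 2-cells: a triangle $F_v$ on the three vertices of $L(\sigma(X_\nabla))$ representing the 1-cells of $X_\nabla$ incident to each $v\in X_\nabla^0$, and a hexagon $G_F$ on the six vertices representing the 1-cells on the boundary of each $F\in X_\nabla^2$. A routine incidence check shows that each new 1-cell lies on exactly one triangle and one hexagon, and that around every new vertex two triangles alternate with two hexagons, matching the trihexagonal pattern; in particular this realizes $X_\odot$ as the planar trihexagonal tiling. The explicit tile description in the statement then follows by tracking how the hexagonal cutout of Theorem~\ref{tnabla} in $[0,14]\times[0,2]$ is refined by this line-graph operation, replacing each vertex of $X_\nabla$ by a pair of triangles sharing a vertex and each hexagon of $X_\nabla$ by a concentric hexagonal face of $X_\odot$.

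Next I would transfer $c_\nabla$ to an SETCC $c_\odot$ by inheritance: $c_\odot(v_e):=c_\nabla(e)$, $c_\odot(F_v):=c_\nabla(v)$, and $c_\odot(G_F):=c_\nabla(F)$. Each new 1-cell arises as a corner at some $v\in X_\nabla^0$ between two of its 1-cells $e_i,e_j$, bounded on one side by the $F\in X_\nabla^2$ containing both; its color must avoid the set $\{c_\nabla(v),c_\nabla(F),c_\nabla(e_i),c_\nabla(e_j),c_\nabla(e_k)\}$, where $e_k$ is the third 1-cell at $v$. The strictness of $c_\nabla$ makes those five colors pairwise distinct, leaving exactly two admissible choices per corner. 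The triangular-face constraint at $F_v$ forces the three new edge colors there to fill the complement $[7]\setminus\{c_\nabla(v),c_\nabla(e_1),c_\nabla(e_2),c_\nabla(e_3)\}$, a set of the correct size three by the TC condition at $v$; and the condition that adjacent 2-cells receive distinct colors is immediate from the strictness of $c_\nabla$.

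The main obstacle is the hexagon-skeleton condition at each $G_F$, which imposes simultaneous constraints on the six corners around $F$. I would resolve it by writing an explicit mod-$7$ formula for the new 1-cell colors paralleling formulas (\ref{sq2}), exploiting the double periodicity of $c_\nabla$ so that the verification of both triangular and hexagonal skeleton conditions reduces to a single algebraic identity modulo $7$. Passing everything to the toroid via the quotient map of Theorem~\ref{tnabla} and iterating the tile across $\mathbb{R}^2$ then yields both the toroidal pair $(Y_\odot,d_\odot)$ and the planar 2-cell complex asserted in the statement.
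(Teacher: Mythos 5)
Your high-level route is the same as the paper's: realize the trihexagonal complex as a 2-cell thickening of the line graph of the hexagonal 1-skeleton of Theorem~\ref{tnabla}, let the new vertices, triangles and hexagons inherit the colors of the old edges, vertices and faces of $(X_\nabla,c_\nabla)$, and then color the new 1-cells. The difficulty is that last step, and your plan stops exactly there. You correctly observe that each new 1-cell has few admissible colors and that the triangle condition forces the three corner edges at a vertex $v$ of $X_\nabla$ to realize the 3-set $[7]\setminus\{c_\nabla(v),c_\nabla(e_1),c_\nabla(e_2),c_\nabla(e_3)\}$, but the simultaneous satisfaction of the hexagon-skeleton condition, the total-coloring condition at the degree-4 vertices, the efficient-domination condition for each color class, and strictness is precisely the content of the theorem, and you defer all of it to an unwritten ``explicit mod-7 formula''. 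Until that formula is produced and checked, nothing has been proved: the constraints you list are tight enough that global consistency is not automatic, and exhibiting a consistent choice is the whole point.

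The paper closes this gap by exhibiting the data outright: the trihexagonal tiling is realized inside the triangular pair $(X_\Delta,c_\Delta)$ of Theorem~\ref{tdelta} (certain vertices are deleted together with their incident edges, each deleted star becoming a hexagonal 2-cell), so every surviving edge already carries its color from $c_\Delta$ and no new edge colors have to be invented; the resulting coloring is then written out on a fundamental tile of 7 hexagons and 14 triangles (Figure~\ref{hubo} and display (\ref{dec27})) and verified by inspection. If you want to keep the line-graph formulation, either make the same identification with a sub-complex of $(X_\Delta,c_\Delta)$ — so that the corner at the incident pair $(v,F)$ inherits the $c_\Delta$-color of the edge of the triangle $v^*$ opposite the vertex $F^*$ — or actually write and verify the mod-7 formula you allude to. One further slip: the five colors $c_\nabla(v),c_\nabla(F),c_\nabla(e_i),c_\nabla(e_j),c_\nabla(e_k)$ are not pairwise distinct ``by strictness''. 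Strictness only separates the colors of 2-cells sharing a vertex; nothing in the definition of an SETCC prevents $c_\nabla(F)$ from equaling $c_\nabla(e_k)$, the color of the third edge at $v$ not lying on $F$. That coincidence happens not to occur for the specific coloring (\ref{sq2}), but this too must be checked rather than asserted.
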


\begin{proof}
The trihexagonal tiling $\mathcal{L}_{\Join}$ is obtained as the line graph of the hexagonal tiling graph treated in Subsection~\ref{ss3}. It can be furnished as a map 
$X_{\Join}$ with its sets of vertices, edges and faces, that is the elements of $V(X_{\Join})$, $E(X_{\Join})$ and $F(X_{\Join})$, respectively, disposed as in the graph in Figure~\ref{tubo}, which is obtained from the right side of Figure~\ref{cubo} by the deletion of certain edges. An SETFC $c_{\Join}$ on $X_{\Join}$ is indicated via the edge colors and vertex and face color numbers (both for hexagons and triangles), obtained from the corresponding ones of $c_\Delta$ on the right of Figure~\ref{cubo}. In fact, every colored hexagon or triangle case of $c_{\Join}$ appears on the right side of Figure~\ref{hubo}, representing a toroidal cutout of $(X_\Join,c_\Join)$ from which a representation of $(Y_\Join,d_\Join)$ is obtained by identification of the top and bottom left-to-right 3-paths on the cutout border with common color sequence $1_52_63_04$ as well as identification of the 4-paths following clockwise from those, namely with common color sequence $4_20_53_16_01$ and as well as identification of the remaining 3-paths, namely with common color sequence $1_53_45_14$. 

\begin{figure}[htp]
\includegraphics[scale=0.76]{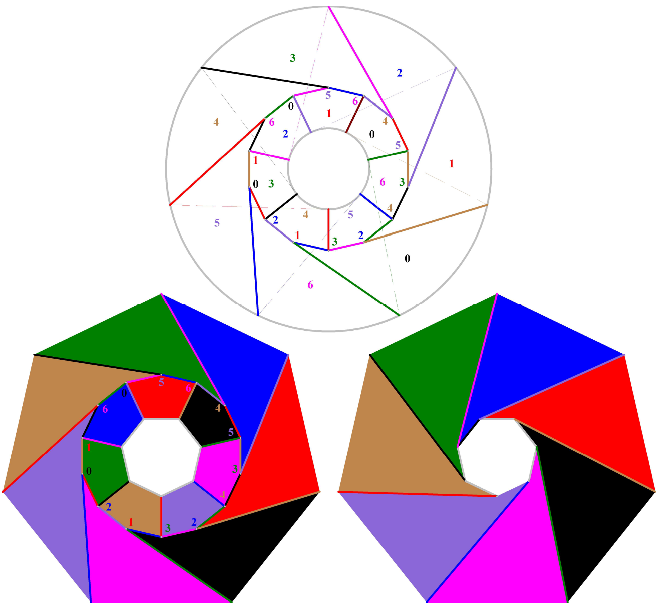}
\includegraphics[scale=0.68]{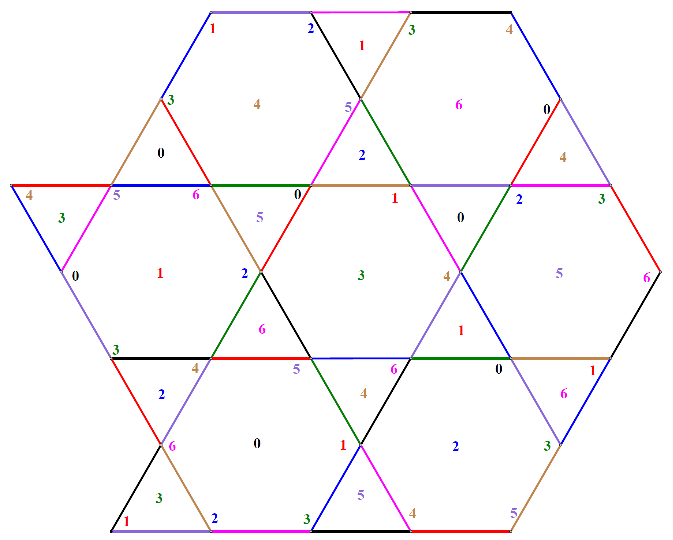}
\caption{SETFCs based on the hexagonal and trihexagonal  (C\&R $(3.6)^2$) tilings.}
\label{hubo}
\end{figure}

Display (\ref{dec27}) encodes this SETFC on a representation of a tile of $\mathcal{L}_{\Join}$ with the smallest number of faces, namely in 7 hexagons and 14 triangles (7 shaped  like $\Delta$ and 7 like $\nabla$). A brace underneath (resp. on top) three contiguous columns in the display represents a triangle $T$ with lower (resp. upper) horizontal edge $e$ and top (resp. bottom) opposite vertex $v$ indicated  in the upper (resp. lower) row with the color-number pair of the hexagonal faces at the left and right  of $v$ bordered on both sides, with the color number $[c_{\Join}(v)]$ between square brackets. If $e'$ and $e''$ are the tilted edges of $T$ and $f$ is its (triangular) face, then  their color numbers appear as ``$(c_{\Join}(e'))[c_{\Join}(f)](c_{\Join}(c''))$''  in the middle row, and if $e=uw$, the lower (resp. upper) row has the remaining colors of $T$ as ``$[c_{\Join}(u)](c_{\Join}(e))[c_{\Join}(w)]$.  

\begin{eqnarray}\label{dec27}\begin{array}{c}
[4]35\overbrace{[4](1)[5]}46\overbrace{[5](2)[6]}50\overbrace{[6](3)[0]}61\overbrace{[0](4)[1]}02\overbrace{[1](5)[2]}13\overbrace{[2](6)[3]}24\overbrace{[3](0)[4]}\\
(5)[1](2)\,[3](6)[2]\,(3)[4](0)\,[3](4)[5](1)[4](5)[6](2)[5](6)[0](3)[6](0)[1](4)[0](1)[2](5)\\
\underbrace{[6](3)[0]}61\underbrace{[0](4)[1]}02\underbrace{[1](5)[2]}13\underbrace{[2](6)[3]}24\underbrace{[3](0)[4]}35\underbrace{[4](1)[5]}46\underbrace{[5](2)[6]}50[6]\\
\end{array}\end{eqnarray}
\end{proof}

\section{Other SETFCs derived from the triangular tiling}\label{derived}

\begin{figure}[htp]
\includegraphics[scale=0.71]{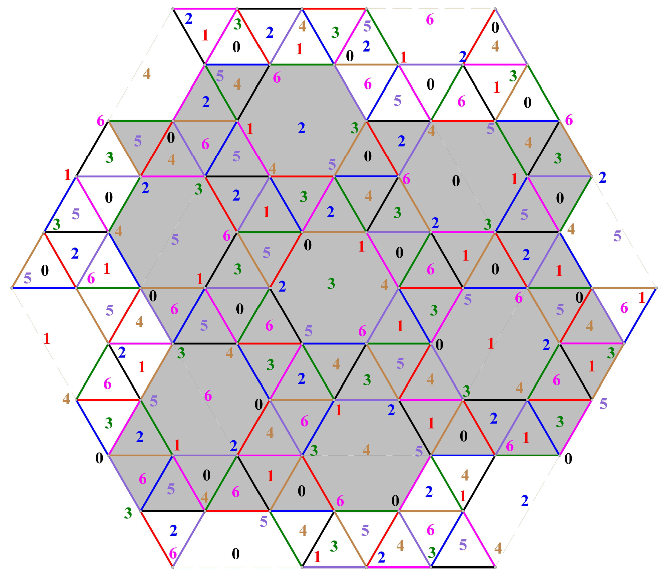}
\includegraphics[scale=0.72]{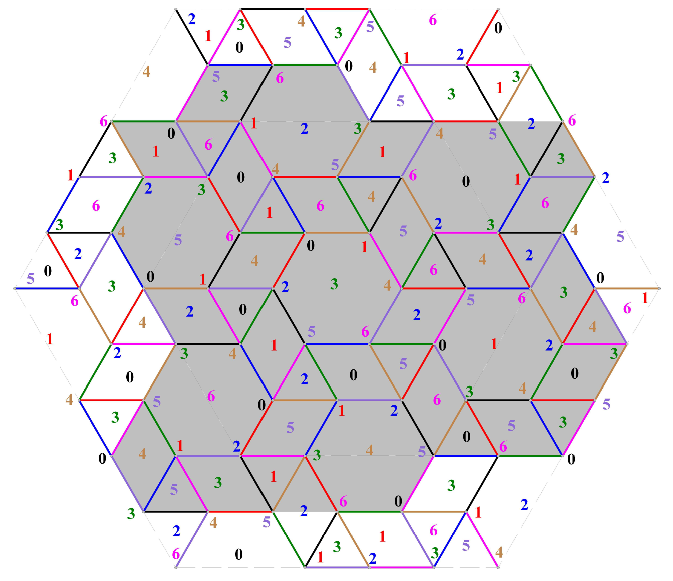}
\caption{ETFC for C\&R $3^4.6$ and SETFC for C\&R 3.4.6.4.}
\label{3464}
\end{figure}

\begin{corollary}
By removing from $(X_\Delta,c_\Delta)$ in the proof of Theorem~\ref{tdelta} the vertices of an efficient dominating set, a pair $(Z_\delta,f_\delta)$ is obtained in $\mathbb{R}^2$, where $Z_\delta$ is a map based on the {\rm C\&R} $3^4.6$ lattice and $f_\delta$ is an ETFC. By still removing from $(Z_\delta,f_\delta)$ those edges adjacent to any two triangles sharing other edges with corresponding hexagons, another pair $(W_\epsilon,g_\epsilon)$ is obtained, where $W_\epsilon$ is a map based on the {\rm C\&R} $3.4.6.4$ and $g_\epsilon$ is an SETFC. These pairs give place to corresponding toroidal maps. The smallest such toroidal pair from $(Z_\delta,f_\delta)$ has 42 degree-5 vertices, 210 edges, seven hexagonal faces sharing one edge each with one of 42 triangles faces, and 14 additional triangles. The smallest such toroidal pair from $(W_\delta,g_\delta)$ has 42 degree-4 vertices, 210 edges, seven hexagonal faces sharing one edge each with one of 21 rhombic faces, and 14 triangles.  
\end{corollary}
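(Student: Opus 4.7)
The plan rests on the EDS structure supplied by Theorem~\ref{tdelta}: since $c_\Delta$ is an ETCC of the $6$-regular graph $\sigma(X_\Delta)$, the colour class $S_0:=c_\Delta^{-1}(0)\cap X_\Delta^0$ is an EDS, so every $v\in X_\Delta^0\setminus S_0$ has a unique neighbour in $S_0$. First I would delete $S_0$ together with every incident edge and fill in each resulting hexagonal void, bounded by the six neighbours of a centre $v_0\in S_0$, with a new $2$-cell to obtain $Z_\delta$. By the EDS property the deleted vertices are pairwise at graph distance at least three, so the new hexagonal faces are pairwise interior-disjoint, and at every surviving vertex $w$ the local picture consists of the four triangles of $X_\Delta$ at $w$ that avoid the $S_0$-neighbour of $w$, together with one hexagonal face; this is precisely the vertex configuration of the snub hexagonal tiling C\&R $3^4.6$.

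Next I declare $f_\delta$ to coincide with $c_\Delta$ on surviving cells and to colour each new hexagon $H$ with colour $0$. The ETC condition of $c_\Delta$ at the centre $v_0$ of $H$ places the six non-zero colours exactly once on the six boundary vertices of $H$. The six boundary edges of $H$ are the ``far'' edges of the six triangles of $X_\Delta$ that met at $v_0$; by the closed formulas~(\ref{sq2}), under the cyclic rotation of those six triangles the edge colour is shifted by a generator of $(\mathbb{Z}/7)^{\times}$, so the six edge colours exhaust the six non-zero colours and in particular avoid $0$. Consequently $\partial H$ carries a $6$-colour set missing only the colour $0$ of $H$, and adjacent-face distinctness across every boundary $1$-cell is automatic because the former triangle face colour of $c_\Delta$ along any $\partial H$-edge is non-zero. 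Hence $f_\delta$ is an ETCC of $Z_\delta$.

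For $(W_\epsilon,g_\epsilon)$ I identify the triangles of $Z_\delta$ that share an edge with a hexagon (six such rim triangles per hexagon) and delete exactly the edges shared by two such rim triangles; these necessarily belong to distinct hexagons, since consecutive rim triangles of a common hexagon are separated around their common vertex by non-rim triangles. Each deletion merges a pair of rim triangles into a rhombic $2$-cell, which I colour with the colour of the deleted edge. Around every surviving vertex the incidences become triangle, rhombus, hexagon, rhombus in cyclic order, namely the rhombitrihexagonal configuration C\&R $3.4.6.4$. The ETCC conditions for the new rhombi and surviving hexagons are inherited from those of $f_\delta$, and strictness of $g_\epsilon$ amounts to verifying that the two $2$-cells touching any vertex at any pair of cyclic positions bear distinct colours, which follows from the bijectivity of the additive shifts in~(\ref{sq2}).

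The toroidal statements follow by projecting $(X_\Delta,c_\Delta)$ through a doubly periodic quotient compatible with $S_0$; the minimal such quotient is the $\mathbb{Z}_7\times\mathbb{Z}_7$ torus, on which $X_\Delta$ has $49$ vertices, $147$ edges and $98$ triangles. Deleting the $7$ vertices of $S_0$, the $42$ incident edges and the $42$ incident triangles, while attaching $7$ hexagons, yields the toroidal $Z_\delta$ with $42$ degree-$5$ vertices, $7$ hexagonal faces and $56=42+14$ triangular faces of rim and middle type, of Euler characteristic zero; subsequent deletion of the $21$ rim-rim edges yields the toroidal $W_\epsilon$ with $42$ degree-$4$ vertices, $7$ hexagons, $21$ rhombi and $14$ triangles, again of Euler characteristic zero. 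The main obstacle I foresee lies in the edge-colour part of the hexagon verification, which I would treat via the $\mathbb{Z}_6$-equivariance of $c_\Delta$ about any $v_0\in S_0$ rather than by case-by-case enumeration.
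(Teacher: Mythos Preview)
Your approach is the same as the paper's: delete an EDS colour class (the paper phrases it as the set of ``central deleted vertices''), colour each resulting hexagon by the centre's colour, then delete the rim--rim edges and colour each new rhombus by the deleted edge's colour. The paper's own proof is essentially a reference to Figure~\ref{3464} together with exactly these two colouring rules and the phrase ``periodic toroidal projections then provide the last assertions'', so your write-up is considerably more explicit than the paper's while following the identical construction.

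Two minor points. First, your sentence ``consecutive rim triangles of a common hexagon are separated around their common vertex by non-rim triangles'' is not the right justification: in the $3^4.6$ link $H,T_1,T_2,T_3,T_4$ at a vertex $w$, the middle triangles $T_2,T_3$ are typically rim triangles for \emph{other} hexagons, not non-rim. What you actually need (and what is true) is that two rim triangles of the \emph{same} hexagon share at most a vertex, never an edge; this is immediate from the fact that each rim triangle sits on a single hexagon edge. Second, your toroidal edge counts ($105$ for $Z_\delta$ and $84$ for $W_\epsilon$) are the ones forced by the handshake lemma and by $\chi=0$, and they disagree with the ``$210$ edges'' printed in the statement; the paper's proof never verifies those numbers, and your Euler-characteristic check is in fact more careful than the original on this point.
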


\begin{proof}
Following the left side of Figure~\ref{3464}, the shaded area represents a toroidal cutout of $(Z_\delta,f_\delta)$, where the vertices removed from the triangular tiling to form the hexagon faces of the C\&R $3^4.6$ lattice form a set of minimal distance 3 such that any other vertex is a distance one from it, thus constituting an efficient dominating set. Then, the first assertion of the statement follows, where each (new) hexagon face receives the color of its central deleted vertex. Rhombic 4-cycles faces are then formed by the remotion of the edges mentioned in the second assertion of the statement, resulting in the pair $(W_\epsilon,g_\epsilon)$, where each (new) rhombic face receives the color of the corresponding deleted edge. Periodic toroidal projections then provide the last assertions of the statement. 
\end{proof}

\begin{figure}[htp]
\includegraphics[scale=1.5]{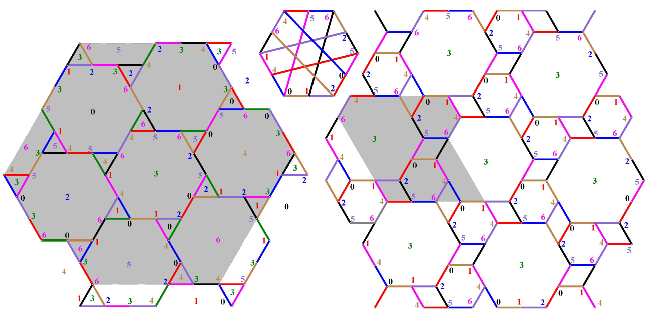}
\caption{Restrictions of $(X_\Delta,c_\Delta)$ for C\&R $3.12^2$ and C\&R 4.6.12 lattices.}
\label{3.12^3}
\end{figure}

\begin{corollary}
By removing from $(X_\Delta,c_\Delta)$ in  Theorem~\ref{tdelta} the vertices of a sub-lattice of vertices of the triangular tiling
with minimal graph distance 4 
generated by the vectors $(3,1)$ and $(1,-3)$ in the coordinate system of its proof, a pair $(\Upsilon_\phi,\upsilon_\phi)$ is obtained in $\mathbb{R}^2$, where $\Upsilon_\phi$ is a map based on the {\rm C\&R} $3.12^2$ lattice and $\upsilon_\phi$ is an SETFC. These pairs give place to corresponding toroidal maps with their corresponding SETFCs, the smallest of which that has an SETFC has 42 vertices, 63 edges, 7 dodecagonal faces and 14 triangles.
\end{corollary}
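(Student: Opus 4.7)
The plan is to excise from $(X_\Delta,c_\Delta)$ of Theorem~\ref{tdelta} the vertices of the sublattice $\Lambda\subset\mathbb{Z}^2$ generated by $(3,1)$ and $(1,-3)$ in the oblique coordinates used there, and then recognize the resulting 2-cell complex, augmented by one new dodecagonal 2-cell per point of $\Lambda$, as the Archimedean tiling $\mathrm{C\&R}\,3.12^2$ carrying an SETCC inherited from $c_\Delta$. As a first step I would record that $\Lambda$ has index $|3\cdot(-3)-1\cdot 1|=10$ in $\mathbb{Z}^2$ and verify the claimed separation by evaluating the displacements $m(3,1)+n(1,-3)=(3m+n,m-3n)$ for small $m,n$ and checking the stated minimum graph distance $4$ in $\Lambda$, so that no two star-of-David regions centred at distinct points of $\Lambda$ share an interior vertex or edge.

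Next I would describe $\Upsilon_\phi$ explicitly. Its vertex set is $X_\Delta^0\setminus\Lambda$; its 1-skeleton is obtained from $\sigma(X_\Delta)$ by deleting every edge incident to $\Lambda$ together with the six hexagon-ring edges joining consecutive neighbours of each $v\in\Lambda$ (these edges would otherwise lie strictly inside the star-of-David region centred at $v$). The surviving triangular 2-cells are those of $X_\Delta^2$ disjoint from $\Lambda$ and from the deleted inner edges, and for every $v\in\Lambda$ a new dodecagonal 2-cell $D_v$ is introduced whose boundary $12$-cycle consists of the twelve vertices at graph distance exactly $2$ from $v$ in $\sigma(X_\Delta)$, joined by the twelve outer edges of the star of David around $v$. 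A short local computation using the six-fold symmetry of $\Delta$ about $v$ confirms that the six triangles incident to $v$ together with the six further triangles adjacent to them fuse into $D_v$, and that every vertex of $D_v$ is incident to exactly one surviving triangle and to two dodecagons, which is precisely the $3.12^2$ vertex figure.

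I would then define $\upsilon_\phi$ by keeping the $c_\Delta$-colour of every surviving vertex, edge and triangular 2-cell, and by assigning the colour $c_\Delta(v)$ to each new dodecagon $D_v$. The vertex-edge axioms and the efficient-dominating property of each colour class transfer directly from $c_\Delta$ since only a sparse sublattice has been removed. The remaining conditions are: (a) on the $24$ boundary labels of each $D_v$ the six colours of $[7]\setminus\{c_\Delta(v)\}$ occur (so that the missing colour is $c_\Delta(v)$, legitimately assignable to $D_v$); and (b) every triangular 2-cell sharing an edge with $D_v$ receives a colour different from $c_\Delta(v)$. Both reduce, via the translation symmetry of $\Lambda$ and the explicit formula $c_\Delta(x,y)\equiv x+5y\pmod 7$, to a single arithmetic check at one representative $v\in\Lambda$; strictness for dodecagon--dodecagon pairs is automatic from the separation of $\Lambda$.

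The main obstacle is the arithmetic check in (a): it is this colour-set condition that forces the generators of $\Lambda$ to be exactly $(3,1)$ and $(1,-3)$ rather than an arbitrary index-$10$ pair, and its verification amounts to evaluating $c_\Delta$ on the twelve boundary vertices and twelve boundary edges of one $D_v$ and observing that the resulting $24$-element multiset of residues mod $7$ omits exactly $c_\Delta(v)$. For the smallest toroidal instance I would choose a torus fundamental domain $\Lambda_T$ containing $\Lambda$ which is also compatible with the mod-$7$ periodicity of $c_\Delta$; on the resulting torus with $49$ triangular vertices, exactly $7$ lie in $\Lambda$, so $\Upsilon_\phi$ inherits $42$ vertices, $7$ dodecagons, $14$ surviving triangles, and by $3$-regularity of the $3.12^2$ 1-skeleton $E=63$ edges, matching the statement.
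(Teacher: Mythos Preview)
Your outline contains an internal inconsistency that is not cosmetic. You compute, correctly, that $\Lambda=\langle(3,1),(1,-3)\rangle$ has index $|3\cdot(-3)-1\cdot 1|=10$ in $\mathbb{Z}^2$; yet in the last paragraph you pass to a $49$-vertex triangular torus in which ``exactly $7$ lie in $\Lambda$'', which would force index $7$. With index $10$ no torus lattice $\Lambda_T\subset\Lambda$ of index $49$ in $\mathbb{Z}^2$ exists, so the count $49\to 42$ cannot be realised. More structurally, your star-of-David excision---delete each $v\in\Lambda$, its six incident edges, and the six ring edges among $N(v)$---lowers the degree of a surviving vertex $w$ by exactly $3$ for each neighbour of $w$ lying in $\Lambda$, and by $0$ if $w$ has no $\Lambda$-neighbour. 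Hence the resulting $1$-skeleton is $3$-regular (and a candidate for the $3.12^2$ skeleton) \emph{only} when every surviving vertex has a unique $\Lambda$-neighbour, i.e.\ when $\Lambda$ is an efficient dominating set of index $7$ and minimum graph distance $3$. With the index-$10$ sublattice there are vertices with no $\Lambda$-neighbour; they keep degree $6$ and the complex you build is not $3.12^2$. (Your promised verification of ``minimum distance $4$'' would also fail: with either diagonal convention in the proof of Theorem~\ref{tdelta}, one of $(3,1)$, $(1,-3)$ lies at graph distance $3$ from the origin.)

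For comparison, the paper's own proof is nothing more than a pointer to the left half of Figure~\ref{3.12^3}; there is no written argument to match yours against. The numbers in the statement ($42=49-7$ vertices, $7$ dodecagons, $14$ triangles) and the preceding corollary's use of an EDS for $3^4.6$ both indicate that an index-$7$ EDS sublattice is what the figure actually depicts, so the printed generators appear to be in error. If you rerun your plan with such a $\Lambda$ (for instance $\langle(3,1),(-1,2)\rangle$ in the main-diagonal convention of display~(\ref{sq2})), the degree count gives $3$-regularity everywhere, the $49$-vertex torus carries exactly $7$ dodecagons receiving the seven colours $c_\Delta(v)$ bijectively, and your mod-$7$ boundary-colour verification (a)--(b) becomes a genuine finite check. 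That would supply a self-contained argument where the paper offers only an illustration.
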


\begin{proof} The statement is.illustrated on the left of Figure~\ref{3.12^3}, where a toroidal cutout of the smallest mentioned toroidal map is represented by the shaded area.                         
\end{proof}

\begin{corollary}\label{sma}
By removing from $(X_\Delta,c_\Delta)$ in  Theorem~\ref{tdelta} the vertices at graph distance $\le 2$ from a sub-tiling of vertices of the triangular tiling 
with minimal graph distance 5
generated by the vectors $(1,4)$ and $(-1,4)$ in the coordinate system of its proof as well as the edges between the remaining triangles, a pair $(U_\psi,u_\psi)$ is obtained in $\mathbb{R}^2$, where $U_\psi$ is a map based on the {\rm C\&R} $4.6.12$ lattice and $u_\psi$ is an ETC but not ETFC. These pairs give place to corresponding toroidal maps with their corresponding ETCs. The smallest such toroidal map has 12 vertices,18 edges, one dodecagonal face, two hexagonal faces and three square faces. 
\end{corollary}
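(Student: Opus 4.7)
The plan is to imitate the strategy of the two preceding corollaries: execute the prescribed deletions on $(X_\Delta,c_\Delta)$, verify by a local inspection that the resulting 2-cell complex realizes the C\&R $4.6.12$ tiling, inherit the $7$-coloring from $c_\Delta$, separate the ETC and ETCC properties, and finally identify the smallest toroidal quotient.

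First I would fix the sub-lattice $\Lambda\subset X_\Delta^0=\mathbb{Z}^2$ generated by $(1,4)$ and $(-1,4)$ in the coordinate system of the proof of Theorem~\ref{tdelta}. For each $p\in\Lambda$, the set of vertices at graph distance $\le 2$ from $p$ in the triangular $1$-skeleton is a closed ball of $19$ vertices; removing all such balls together with their incident cells leaves an annular region around each $p$ whose inner boundary consists of the $18$ vertices at distance exactly $3$ from $p$. After also deleting every $1$-cell whose two incident triangles both survive, I would verify --- at one representative $p$, and then propagate by the translation-symmetry of $\Lambda$ --- that the inner boundary becomes the cycle of a dodecagonal $2$-cell, the deleted triangle--triangle edge positions become square $2$-cells, and the surviving triangles are separated from the dodecagons by hexagonal $2$-cells, realizing the C\&R $4.6.12$ pattern globally.

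Next I would define $u_\psi$ by inheritance from $c_\Delta$: surviving vertices, edges, and triangles keep their colors; each dodecagonal face receives the color of its removed central vertex $p$; each square face the color of its deleted triangle--triangle edge; and each hexagonal face the color of a suitable deleted boundary edge. The TC condition of Definition~\ref{ahora}(a) holds at every degree-$3$ surviving vertex $v$ because its $7$-element closed-star in $U_\psi$ already carries $7$ distinct colors under $c_\Delta$, and the EDS partition of Definition~\ref{ahora}(b) descends to the surviving vertex set, giving the ETC condition. To rule out ETCC, I would exhibit a dodecagon $D$ whose $24$ boundary vertex-and-edge labels $c(\sigma(D))$ exhaust all of $[7]$, so that $[k+1]\setminus c(\sigma(D))$ is empty and no color remains for $D$ in the sense of Definition~\ref{que}.

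For the smallest toroidal pair I would quotient $\mathbb{R}^2$ by a minimal sub-lattice $\Lambda'\le\mathbb{Z}^2$ containing $\Lambda$ across which $c_\Delta$ descends (using the mod-$7$ periodicity of~\eqref{sq2}); the fundamental domain is forced to contain a single $\Lambda$-orbit, and Euler's formula combined with $3$-regularity and the prescribed face types forces $|V|=12$, $|E|=18$, one dodecagon, two hexagons, and three squares. The main obstacle I expect is the face-structure step: proving that the balls of radius $2$ about neighboring sub-lattice points do not merge, and that their complement together with the triangle--triangle edge deletions glues into exactly the $4.6.12$ tiling (and not some defective variant). This is a global combinatorial assertion about how successive rings interlock, and the specific choice of generators $(1,4)$ and $(-1,4)$ is precisely what makes this interlock compatible with the doubly $7$-periodic coloring in~\eqref{sq2}.
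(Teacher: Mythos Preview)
Your plan is far more detailed than the paper's own argument, which consists of a single sentence referring to Figure~\ref{3.12^3} and its shaded fundamental domain; in that sense you are supplying what the paper omits, following the same ``delete, inspect, inherit, quotient'' template used in the two preceding corollaries.

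Two places in your outline would not survive as written. First, your count for the dodecagon boundary is off: you correctly note that the $18$ vertices at graph distance~$3$ from a centre $p$ border the removed radius-$2$ ball, but a dodecagon has only $12$ vertices. The resolution is \emph{not} that the radius-$2$ balls ``do not merge'' (as you phrase your main obstacle) but precisely the opposite: with sub-lattice minimal distance~$5$, six of those $18$ vertices lie at distance~$2$ from a neighbouring centre and are therefore also deleted, leaving exactly the $12$ that form the dodecagonal boundary. Your ``balls do not merge'' framing would, if true, leave you with $18$-gons and no $4.6.12$ pattern.

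Second, your verification that $u_\psi$ is an ETC does not match Definition~\ref{ahora}. After the deletions, $\sigma(U_\psi)$ is $3$-regular, so Definition~\ref{ahora}(a) would require $N[v]\leftrightarrow[4]$ for a $4$-colour set, whereas $u_\psi$ inherits seven colours from $c_\Delta$. Your ``$7$-element closed star'' argument (vertex, three neighbours, three edges) is a reasonable total-colouring check but is not the bijection $N[v]\leftrightarrow[k+1]$ that the definition demands. The paper's own wording here is loose --- it appears to use ``ETC'' informally for the inherited vertex-and-edge colouring --- so you should either flag this discrepancy explicitly or make clear which relaxed reading you are adopting before invoking Definition~\ref{que} to rule out ETCC via the dodecagon's full-$[7]$ boundary (that last step is fine).
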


\begin{proof}
The statement is illustrated on the right of Figure~\ref{3.12^3}, where a cutout of the mentioned smallest toroidal map is represented by the shaded area, the 1-skeleton of which is drawn on the upper center of the figure.
\end{proof}

\begin{conjecture}
The smallest toroidal map of Corollary~\ref{sma} is the smallest toroidal map admitting an ETC but not an ETFC.
\end{conjecture}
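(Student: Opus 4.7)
The plan is to rule out every toroidal $2$-cell complex strictly smaller than $U_\psi$, showing that each either carries no ETC on its $1$-skeleton or extends every ETC it carries to an ETCC. First I would extract the arithmetic constraints on any candidate toroidal $k$-regular $2$-cell complex $X$: since Definition~\ref{ahora} forces each color class to be an EDS, $(k+1)\mid|X^0|$; $k$-regularity gives $|X^1|=k|X^0|/2$; and the Euler formula on the torus gives $|X^2|=(k-2)|X^0|/2$, so $k\ge 3$. With $|X^0|\le 12$, the admissible pairs $(|X^0|,k)$ reduce to $(4,3)$, $(5,4)$, $(6,5)$, $(7,6)$, $(8,3)$, $(10,4)$, $(12,3)$ and $(12,5)$, after discarding the $K_n$-cases with $n\ge 8$ since $K_n$ then has genus greater than $1$.

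The central observation is that the first ETCC condition in Definition~\ref{que} (that the boundary $\sigma(B)$ uses exactly $k$ colors of $[k+1]$) fails precisely when the vertex-and-edge elements of $\sigma(B)$ exhaust all of $[k+1]$; the second condition, that adjacent $2$-cells carry different colors, is then a pairwise check. For the small complete-graph cases $(n,k)\in\{(5,4),(6,5),(7,6)\}$, every vertex carries a distinct color under any ETC, so a boundary cycle of length $\ge k+1$ is automatically forbidden; I would enumerate the valid face-length multisets subject to $\sum_{B}\ell(B)=k|X^0|$, $|X^2|=(k-2)|X^0|/2$ and $3\le\ell(B)\le n$, discard those lacking a toroidal realization, and verify directly that in every surviving spectrum each face has length $\le k$, so both ETCC conditions hold. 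The cases $(4,3)$, $(8,3)$, $(10,4)$ and $(12,5)$ reduce to a finite list of $k$-regular toroidal $2$-cell complexes, for each of which the ETC-existence analysis of~\cite{+1} together with a direct check of the two ETCC conditions should suffice.

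The main obstacle is the residual case $(|X^0|,k)=(12,3)$: cubic toroidal $2$-cell complexes on $12$ vertices exist in many flavours, distinguished by multisets of six face lengths summing to $36$. Here one must show that only the multiset $(12,6,6,4,4,4)$ realized by $U_\psi$ supports an ETCC-obstructive ETC. The key structural mechanism in $U_\psi$ is that the dodecagonal face has its boundary cycle visiting all $12$ vertices, so every ETC forces its vertex-and-edge color set on that cycle to equal $[4]$, leaving no missing color to label the face. For any other admissible multiset, the maximum face length is at most $9$, so each boundary cycle misses at least three vertices; combining this arithmetic with the partition of $V$ into four EDS classes of size $3$ should allow one to exhibit a missing color on every face and to verify the adjacency condition across every $1$-cell. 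Carrying out this classification and the per-multiset verification is the step I expect to dominate the proof.
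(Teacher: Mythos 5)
The statement you are trying to establish is posed in the paper as a \emph{conjecture}: the paper offers no proof, so your text cannot be measured against an intended argument and must stand on its own. It does not. What you have written is a proof \emph{plan} whose decisive steps are left unexecuted (you say yourself that the classification and per-multiset verification will "dominate the proof"), and several of the inferences you do commit to are wrong. The most serious is the claim that once every face of a candidate complex has boundary length at most $k$, "both ETCC conditions hold." Definition~\ref{que} requires the boundary $\sigma(B)$ to carry \emph{exactly} $k$ of the $k+1$ colors; a short face can just as well carry all $k+1$, or fewer than $k$ (two or more missing colors). Concretely, in a cubic complex with an ETC, a triangular face $uvw$ with $c(u)=0$, $c(v)=1$, $c(w)=2$ can legally receive $c(uv)=3$, so all four colors already appear on a face of length $3<k+1$; and in $K_7$ a triangular face can repeat a vertex color on an edge and miss two colors. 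Symmetrically, a face boundary is a closed walk that may repeat vertices, so "length $\ge k+1$ is automatically forbidden" is unjustified. Finally, the face-length multiset determines neither the embedding nor the ETC: a single $1$-skeleton can carry several inequivalent ETCs and several inequivalent toroidal $2$-cell structures with the same multiset, so your reduction to "surviving spectra" discards exactly the data the verification needs.

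The global frame also has gaps. "Smallest" is never pinned down (number of vertices? of cells?). Some cases on your list can in fact be killed cleanly and you miss this: for $K_{k+1}$ with $k$ odd ($(4,3)$ and $(6,5)$) the edge color class $i$ would have to be a matching covering the odd set of $k$ vertices not colored $i$, so no ETC exists at all; by contrast you delegate $(4,3)$, $(8,3)$, $(10,4)$, $(12,5)$ wholesale to "the analysis of~\cite{+1}" and "a direct check." In the residual case $(12,3)$, the assertions that every admissible multiset other than $(12,6,6,4,4,4)$ has maximum face length at most $9$, and that a boundary missing three vertices must miss a color, are both unproved and, as the triangle example shows, insufficient even for short faces (a color absent from the boundary vertices can still occur on a boundary edge). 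You do identify the correct arithmetic constraints ($(k+1)\mid|X^0|$, $|X^1|=k|X^0|/2$, $|X^2|=(k-2)|X^0|/2$ on the torus) and the right obstruction inside $U_\psi$ itself (the dodecagonal face meets all four vertex color classes), but the conjecture remains unproved by this proposal.
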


\begin{figure}[htp]
\includegraphics[scale=0.72]{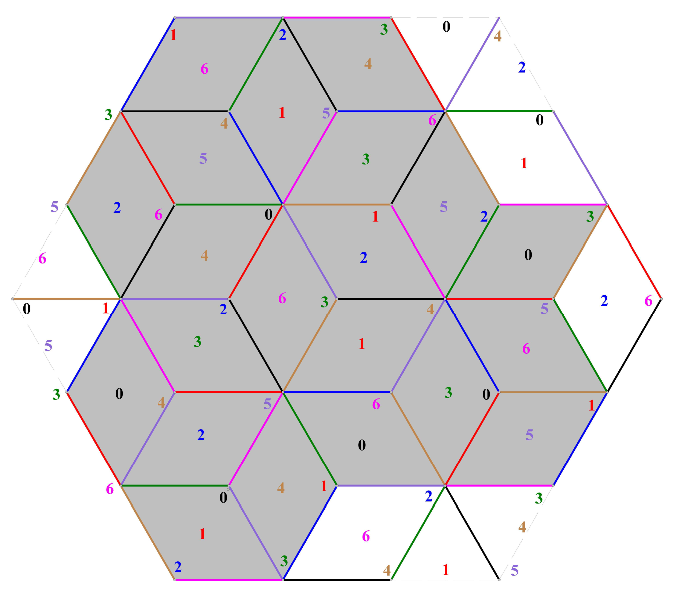}
\includegraphics[scale=0.72]{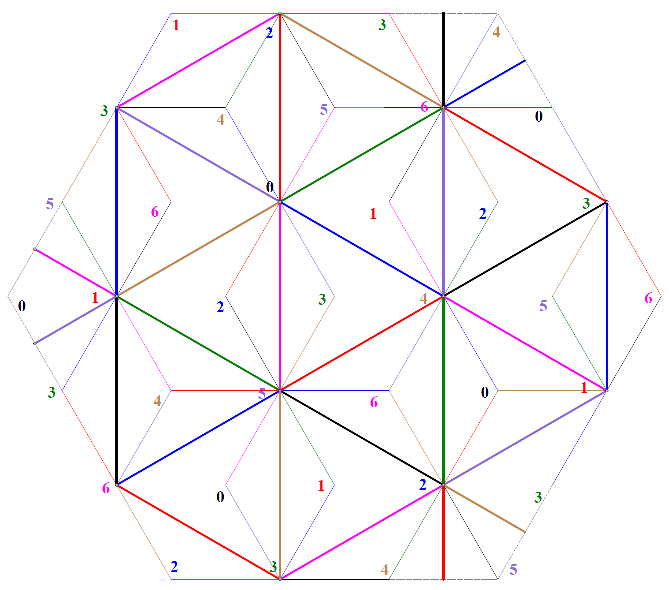}
\caption{SETFC based on the rhombille tiling and recovering $(X_\Delta,c_\Delta)$ from it.}
\label{rombos}
\end{figure}

\begin{corollary} Let $\mathbb{L}$ be the vertex subset of the graph $\sigma(X_\Delta)$ of the triangular map $X_\Delta$ containing the origin $(0,0)$ and having minimum graph distance 2. Then,
$\sigma(X_\Delta)\setminus E(N_G(\mathbb{L}))$, obtained by removing from $\sigma(X_\Delta)$ the edges $\xi$ between vertices not in $\mathbb{L}$, 
is the 1-skeleton of a planar map $X_\otimes$ based on the rhombille tiling of the plane whose faces are the unions each of two triangles of $X_\Delta$ adjacent via such an edge $\xi$.  
Moreover, $X_\otimes$ has an SETFC $c_\otimes$ with $k=7$. 
The pair $(X_\otimes,c_\otimes)$ gives place to corresponding toroidal maps with associated SETFCs, the smallest such pair $(Y_\otimes,d_\otimes)$ having 7 degree-6 vertices, 14 degree-3 vertices, 42 edges (14 in each of  three edge directions) and 21 rhombic faces.
\end{corollary}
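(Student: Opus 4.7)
The plan is to identify $\mathbb{L}$, exhibit the rhombille structure, define and verify the SETCC $c_\otimes$, and finally compute the smallest toroidal quotient.

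First I would show that, in the oblique coordinates of Theorem~\ref{tdelta}, $\mathbb{L}$ coincides with the index-$3$ sublattice $\{(x,y)\in\mathbb{Z}^2:x\equiv y\pmod 3\}$ generated by $(1,1)$ and $(3,0)$. It contains the origin, is independent in $\sigma(X_\Delta)$ with pairwise graph distance at least $2$, and coincides with one color class of the unique proper $3$-vertex-coloring of $\sigma(X_\Delta)$, hence is the densest such set through the origin. Since each triangular $2$-cell of $X_\Delta$ realizes all three residues modulo $3$, every triangle has a unique $\mathbb{L}$-vertex and a unique opposite edge $\xi$ joining two non-$\mathbb{L}$ vertices. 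Deleting all such edges leaves $\mathbb{L}$-vertices with degree $6$ and non-$\mathbb{L}$-vertices with degree $3$, and each deleted $\xi$ is shared by exactly two triangles that coalesce into an equilateral rhombus $R(\xi)$ whose $60^\circ$ corners are the two $\mathbb{L}$-vertices of $R(\xi)$ and whose $120^\circ$ corners are the endpoints of $\xi$, yielding the rhombille tiling.

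Next I would define $c_\otimes$ to agree with $c_\Delta$ on $X_\otimes^0\cup X_\otimes^1$ and assign to each rhombus $R(\xi)$ the color $c_\Delta(\xi)$ of its deleted interior diagonal. The central verification is that the eight border elements of $R(\xi)$ use exactly the seven colors $[7]\setminus\{c_\Delta(\xi)\}$. Using the linear formulas in (\ref{sq2}), the three rhombus orientations (distinguished by the residue modulo $3$ of the $\mathbb{L}$-corner) reduce, up to translation, to three standard cases; in the anti-diagonal case with $\mathbb{L}$-corner $(x,y)$, the four border vertex colors are $\{0,1,5,6\}+(x+5y)\bmod 7$ and the four border edge colors are $\{0,2,4,6\}+(x+5y)\bmod 7$, whose union misses only $(3+x+5y)\bmod 7$, precisely $c_\Delta(\xi)$; the other two orientations are handled by analogous computations. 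To upgrade the resulting ETCC to an SETCC I would verify the strict condition: at a hub $v\in\mathbb{L}$, a direct inventory of the six triangles at $v$ (two in each orientation) shows that the six incident rhombi realize the six colors of $[7]\setminus\{c_\Delta(v)\}$ and are thus pairwise distinct; at a non-hub $u$, the three incident rhombi receive the $c_\Delta$-colors of the three $X_\Delta$-edges at $u$ joining $u$ to non-$\mathbb{L}$ vertices, and these are distinct by the TC property of $c_\Delta$ at $u$. I expect this three-case verification to be the main obstacle; once it is done, adjacent rhombi along any shared edge also receive distinct colors as a special case of the strict condition.

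Finally, for the toroidal quotients I would intersect the periodicity group of $c_\Delta$ (translations $(a,b)$ with $a+5b\equiv 0\pmod 7$) with that of $\mathbb{L}$ (translations with $a\equiv b\pmod 3$). Since $3$ and $7$ are coprime, the Chinese remainder theorem yields a sublattice of $\mathbb{Z}^2$ of index $21$ whose smallest fundamental tile is generated, for instance, by $(5,-1)$ and $(-1,-4)$. This tile contains $21$ vertices of $X_\Delta$ ($7$ in $\mathbb{L}$ and $14$ outside), $63$ edges of $\sigma(X_\Delta)$ of which exactly $21$ are deleted (one per rhombus, $7$ per direction), leaving $42$ edges distributed as $14$ per direction and $21$ rhombic $2$-cells, which matches the claimed counts for the smallest toroidal $(Y_\otimes,d_\otimes)$.
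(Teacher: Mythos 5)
Your proposal is correct, and it is substantially more of a proof than the paper gives: the paper's own ``proof'' consists of a single sentence pointing to the left of Figure~\ref{rombos} and a remark that the right of that figure recovers $(X_\Delta,c_\Delta)$ from the long diagonals of the rhombi. Where the paper argues entirely by picture, you argue analytically from the formulas in (\ref{sq2}): you pin down $\mathbb{L}$ as the index-$3$ sublattice $\{x\equiv y\pmod 3\}$ (which resolves the ambiguity in the statement's phrase ``the vertex subset \dots having minimum graph distance 2''), you derive the degree-$6$/degree-$3$ split and the rhombus structure from the fact that every triangle meets $\mathbb{L}$ in exactly one vertex, and you verify the missing-color condition by direct computation. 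I checked your anti-diagonal case and the two cases you leave as ``analogous'': for a deleted horizontal edge at $(x,y)$ with $t=x+5y$ the border colors are $\{0,1,3,5\}+t$ (vertices) and $\{2,3,5,6\}+t$ (edges), missing $t+4=c_\Delta(\xi)$; for a deleted vertical edge they are $\{0,1,4,5\}+t$ and $\{1,2,3,4\}+t$, missing $t+6=c_\Delta(\xi)$; so all three orientations work. Your strictness check at a hub (the six surrounding rhombi receive $\{1,\dots,6\}+c_\Delta(v)$) and at a degree-$3$ vertex (three rhombi colored by the three distinct colors of the deleted edges at that vertex) is also correct, as is the lattice count: $(5,-1)$ and $(-1,-4)$ both satisfy $a+5b\equiv 0\pmod 7$ and $a\equiv b\pmod 3$ with determinant $-21$, giving $21$ vertices, $63-21=42$ surviving edges ($14$ per direction) and $21$ rhombi. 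What your approach buys is a checkable certificate independent of the figure; what it costs is that you must trust the paper's edge-color formula for the diagonal edges, which is internally inconsistent in (\ref{sq2}) (the edge set of $X_\Delta$ uses anti-diagonals $\{(x+1,y),(x,y+1)\}$ while the color formula is written for $\{(x,y),(x+1,y+1)\}$); your reading --- color $(3+x+5y)\bmod 7$ for the anti-diagonal of the unit square at $(x,y)$ --- is the only one under which $c_\Delta$ is a valid ETC, so it is the right one, but it would be worth flagging explicitly.
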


\begin{proof} The statement is illustrated on the left of Figure~\ref{rombos}, where the shaded area covers a toroidal cutout of the pair $(Y_\otimes,d_\otimes)$. The right of the figure shows how to recover $(X_\Delta,c_\Delta)$ from it by considering the long diagonals of the rhombic faces of $X_\otimes$.
\end{proof}

\end{document}